\def\blx@maxline{77}
\definecolor{blue75}{rgb}{0,0,.75}
\definecolor{green75}{rgb}{0,.75,0}
\crefname{equation}{}{}
\crefname{enumi}{}{}
\crefname{section}{{\it Section}}{{\it Sections}}
\crefname{subsection}{{\it Section}}{{\it Sections}}
\crefname{subsubsection}{{\it Paragraph}}{{\it Paragraphs}}
\crefname{table}{{\it Table}}{{\it Tables}}
\newtheorem{Theorem}{Theorem}[section]
\crefname{Theorem}{{\it Theorem}}{{\it Theorems}}
\newtheorem{Definition}[Theorem]{Definition}
\crefname{Definition}{{\it Definition}}{{\it Definitions}}
\newtheorem{Lemma}[Theorem]{Lemma}
\crefname{Lemma}{{\it Lemma}}{{\it Lemmas}}
\crefname{Proposition}{{\it Proposition}}{{\it Propositions}}
\newtheorem{Assumptions}[Theorem]{Assumptions}
\crefname{Assumptions}{{\it Assumptions}}{{\it Assumptions}}
\theoremstyle{definition}
\newtheorem{Remark}[Theorem]{Remark}
\crefname{Remark}{{\it Remark}}{{\it Remarks}}
\crefname{Notation}{{\it Notation}}{{\it Notations}}
\newtheorem{Example}[Theorem]{Example}
\crefname{Example}{{\it Example}}{{\it Examples}}
\newcommand{\parenthezises}[1]{\arabic{#1}}
\begin{document}

\newcommand{\cred}[1]{\textcolor{blue}{#1}}
\newcommand{\cb}[1]{\textcolor{blue}{#1}}

\newcommand{\one}{\mathbf{1}}

\newcommand{\vd}{\varphi_{\delta}}

\newcommand{\io}{\int_{\Omega}}
\newcommand{\id}{\int_{\R^d}}
\newcommand{\R}{\mathbb{R}}
\newcommand{\N}{\mathbb{N}}
\newcommand{\Z}{\mathbb{Z}}
\newcommand{\oa}{\overline{\Omega}}
\newcommand{\ve}{\varepsilon}
\newcommand{\D}{\mathbb{D}}
\newcommand{\dij}{d_{ij}}
\newcommand{\De}{\D_{\varepsilon}}
\newcommand{\Dek}{\D_{\varepsilon_k}}
\newcommand{\As}{\mathcal{A}}
\newcommand{\td}{\,\text{d}}
\newcommand{\dije}{(\dij)_{\varepsilon}}
\newcommand{\dijk}{(\dij)_{\varepsilon_k}}
\newcommand{\ce}{c_{\varepsilon}}
\newcommand{\cen}{c_{\varepsilon_k}}
\newcommand{\coe}{c_{0\varepsilon}}
\newcommand{\Tmaxe}{T_{max,\varepsilon}}
\newcommand{\bc}{\overline{c}}
\newcommand{\cea}{C^{1+\alpha,\frac{1+\alpha}{2}}}
\newcommand{\cza}{C^{2+\alpha,1+\frac{\alpha}{2}}}
\newcommand{\czao}{C^{2+\alpha}(\oa)}
\newcommand{\cek}{c_{\varepsilon_k}}
\newcommand{\ad}{\alpha_{\delta}}
\newcommand{\Dd}{\{\D \geq \delta\}}
\newcommand{\Ddk}{\{\D < \delta_n\}}
\newcommand{\Nd}{N_{\delta}}
\newcommand{\Bad}{B_{\frac{\ad}{2}}}
\newcommand{\en}{\beta_n}
\newcommand{\Ad}{A_{\delta}}

\newcommand{\diag}{\text{diag}}
\newcommand{\dist}{\text{dist}}
\newcommand{\supp}{\text{supp}}
\newcommand{\sign}{\text{sign}}

\newcommand{\Lip}{\text{Lip}}

\numberwithin{equation}{section}

\theoremstyle{definition}
\newtheorem{proofpart}{Step}
\makeatletter
\@addtoreset{proofpart}{Theorem}
\makeatother
\numberwithin{equation}{section}

\title{Global existence of solutions to a nonlocal equation with degenerate anisotropic diffusion}
\author{Maria Eckardt\thanks{Felix-Klein-Zentrum f\"ur Mathematik, RPTU Kaiserslautern-Landau,
Paul-Ehrlich-Str. 31, 67663 Kaiserslautern, Germany \href{mailto:eckardt@mathematik.uni-kl.de}{eckardt@mathematik.uni-kl.de}} \ and Anna Zhigun\thanks{School of Mathematics and Physics, Queen's University Belfast, University Road, Belfast BT7 1NN, Northern Ireland, UK, \href{mailto:A.Zhigun@qub.ac.uk}{A.Zhigun@qub.ac.uk}} 
}

\date{}
\maketitle

\begin{abstract}
 Global existence of very weak solutions to a non-local diffusion-advection-reaction equation is established under no-flux boundary conditions in higher dimensions. The equation features  degenerate myopic diffusion and nonlocal adhesion and is an extension of a mass-conserving model recently derived in  \cite{ZRModelling}. The admissible degeneracy of the diffusion tensor is characterised in terms of the upper box fractal dimension.  
\\\\
{\bf Keywords}:   degenerate anisotropic diffusion, integro-PDE, nonlocal  adhesion 
\\
MSC 2020: 
35K65
35Q92 
45K05 
92C17 
\end{abstract}

\section{Introduction}
 In this paper we study the initial boundary value problem (IBVP)
\begin{subequations}\label{model}
\begin{alignat}{3}
    &\partial_t c = \nabla \nabla : (\D c) - \nabla \cdot (c \As c ) + \mu c (1-c^{r-1}) &&\qquad\text{in }{\Omega\times [0,\infty)},\label{IPDE}\\
    &(\nabla \cdot (\D c)- c\As c)\cdot \nu = 0 &&\qquad\text{in }{\partial\Omega\times [0,\infty)},\label{bc}\\
&c = c_0 &&\qquad \text{in }{\Omega\times \{0\}},
\end{alignat}
\end{subequations}
where $\As$ is the standard adhesion operator \cite{Armstrong2006}{, see \cref{DefA} below}, $\nabla \nabla : ({\D}c)$ is the myopic diffusion \cite{HillenPainter} driven by a symmetric non-negative definite diffusion tensor $\D=\D(x)$, $\mu>0$ and $r\geq2$ are positive constants, and $\Omega$ is a smooth bounded domain in $\R^d$, $d\in\N$,   boundary $\partial\Omega$ and outer unit normal $\nu$. 
The nonlocal diffusion-advection-reaction equation \cref{IPDE} is an extension of an equation that was recently derived in \cite{ZRModelling} using a multiscale approach and   corresponds to the case $\mu=0$. It can describe the evolution of density $c=c(t,x)$ of a cell population that disperses due to a potentially anisotropic diffusion and nonlocal adhesion, thus upgrading the original model from \cite{Armstrong2006} where the diffusion term is $D\Delta c$ with $D>0$ a constant. We refer to \cite{ZRModelling} for further details regarding the modelling and derivation approaches.

While the combination of adhesion with a Fickian-type diffusion has received much attention, see, e.g.  \cite{reviewNonlocal2020} and references therein, the case of myopic diffusion has not been analysed so far. The few papers \cite{Heihoff,WinSur2017,WinklerMyopic,StinnerWinklerMyopic} that have dealt with existence and long-time behaviour of solutions to problems that include both myopic diffusion and advection are restricted to versions of the model derived in \cite{EngwerHuntSur}. It features haptotaxis, i.e. the directed movement along the local  gradient of an external immovable signal, rather than the spatially nonlocal  intrapopulational adhesion as in \cref{IPDE}. 
  Apart from that, as a result of somewhat different underlying derivation approaches in \cite{EngwerHuntSur} compared to \cite{ZRModelling}, the advection velocity in the aforementioned haptotaxis model is multiplied by the diffusion tensor, whereas in \cref{IPDE} this is not the case. Thus, here it is in no way 'subordinate' to the diffusion and, in particular, the adhesion term need not vanish in those areas where diffusion is absent. Finally, we observe that apart from \cite{Heihoff} where dimensions two and three were treated, other  works \cite{WinSur2017,WinklerMyopic,StinnerWinklerMyopic} only considered the one-dimensional case.

The goal of this note is to establish a result on global existence of solutions to \cref{IPDE} equipped with no-flux boundary and initial conditions. Our approach works for $\mu>0$, i.e. in the presence of the generalised logistic-type growth term. While it describes  a biologically relevant effect (e.g. cell growth/death), our main motivation for including the source term stems from the analytical challenges that arise in the case of $\mu=0$. In the latter scenario, since the diffusion is non-Fickian and degenerate, only mass preservation is a priori guaranteed, indicating that generally solutions need not be functions but could be measure-valued. Here we chose to avoid this possibility by including the growth term. While our analysis allows for degenerate diffusion tensors, we require the degeneracy set, i.e. the set of points where $\D$ is not positive definite, to have a positive distance to  the boundary of $\Omega$ and to be sufficiently low-dimensional, see condition \cref{bedd} below. This condition seems to be new in the context of degenerate diffusion. It  arises from \cref{LemFD} in \cref{appB} and provides a certain balance between the degenerate diffusion and the nonlinear growth term.   

The reminder of the paper is organised in the following way. After fixing some notation and formally defining the adhesion operator $\As$  in \cref{SecNot}, we fully set-up our model and formulate our main result on existence of very weak solutions,  \cref{veryweaksol}, in \cref{SecModel}. We then analyse suitably constructed approximation problems in \cref{SecAppr}. The uniform estimates that we establish there allow to apply the compactness method and prove \cref{veryweaksol} in \cref{SecEx}. Finally, in \cref{appendixvws} we provide a justification of our solution concept proving that regular solutions of this sort are classical. 

\section{Preliminaries}\label{SecNot}
\subsection{Notation}
 {Let $d\in\N$, $d>1$.} For $x\in\R^d$ we denote $x=(x',x_d)$, where $x' \in \R^{d-1}$ and $x_d \in \R$. 
 
 {By $B_1$ we denote the unit ball in $\R^d$ centred at the origin.}
 
We use the set notations   
\begin{align}
&A+B:=\{x+y:\ x\in A,\ y\in B\},\\
&a+B:=B+a:=\{a\}+B,\\
 &O_{s}(A):=\{x\in\R^d:\ \dist(x;A)<s\}
\end{align}
for $A,B\subset\R^d$, $a\in\R^d$, and $s>0$.

In $\R^d$, we denote by $|\cdot|$ and $|\cdot|_{\infty}$ the Euclidean and infinity norms, respectively.
{For matrices $A,B \in \R^{d\times d}$ we use the Frobenius inner product
\begin{align*}
	A : B = \sum_{i,j=1}^{d} a_{ij}b_{ij}.
\end{align*}
We denote by $|A|$ the spectral norm of $A$.
}

 {Let $\Omega\subset \R^d$ be a bounded domain with a sufficiently smooth  boundary.} By convention{,} we assume for any function $u: \Omega \rightarrow \R$ that $u \equiv 0$ on $\R^d\setminus \oa$. This allows for an obvious meaning to be given to  convolution $u\star v$ for any $u\in L^1(\Omega)$ and $v\in L^1(\R^d)$.  
These conventions extend   componentwise to any vector{/}matrix-valued function $u$.

{For} a matrix function $\D = (\dij)_{i,j = 1,{\dots},d} \in C(\oa;\R^{d\times d})$ we {write} $\D \geq c$ if $x^T\D(y) x \geq c$ for all $x \in \R^d$ and $y\in \oa$ (analogously for $\geq$, $<$, $\leq$). Further, for $\D \geq 0$ we define the set 
\begin{align*}
	\{\D \ngtr 0\} := \{x \in \oa : \, \exists y\in\R^n \text{ s.t. } y^T\D(x) y = 0\}.
\end{align*}

For the definition of high-order H\"older spaces we refer to \cite{Lady}.

 Finally, we make the convention that $C_i$ denotes a positive constant or a positive function of its arguments for all indices $i\in \N$.

\subsection{The adhesion operator}
We formally define the adhesion $\As$  operator  between two functional spaces in the way that suits our needs.
\begin{Definition}\label{DefA}
Consider a continuous function $F : [0,1] \rightarrow [0,\infty)$.
The adhesion operator is given by
\begin{align}
	\As : L^1(\Omega) \rightarrow \left(L^{\infty}(\Omega)\right)^{ d}, \quad \As u (x) :=  \frac{1}{|B_1|}\int_{B_1} u(x+ \xi) \frac{\xi}{|\xi|} F(|\xi|) d\xi.\label{DefAs}
\end{align}
\end{Definition}
{\begin{Remark}In \cref{DefAs} we assumed the so-called sensing radius, i.e. the radius of the integration domain, to be $1$. This is no restriction of generality since this value can be always achieved through a suitable rescaling of the spatial variable.\end{Remark}}

 The operator is  well-defined and bounded, see, e.g. \cite{EckardtPaSuZh}. Further, as observed in \cite{ZRModelling}, $$\As u = -\nabla H \ast u,$$ where $H$ is the interaction potential given by
\begin{align*}
	H: \R \rightarrow [0,\infty], \quad H(x) := \frac{1}{|B_1|} \int_{\min\{|x|,1\}}^{1} F(\sigma) \,\text{d}\sigma,
\end{align*}
so that its gradient is the $L^{\infty}$ function
\begin{align*}
	\nabla H (x) := \begin{cases}
		-\frac{1}{|B_1|} \frac{x}{|x|}F(|x|)  &\text{ if }x \in B_1\setminus \{0\},\\
		0 &\text{ if }x \in \R^d\setminus \overline{B_1}.
	\end{cases}
\end{align*}
Consequently, we can state the Lemma below which follows directly from the properties of convolution and the definition of a H\"older space (see e.g. \cite[Chapter 6, Theorems 6.27-6.28]{Klenke} for $C^k$ with $k\in\N$, the H\"older estimates follow through a direct calculation). 

\begin{Lemma}\label{lemconvstetig}
Let $k \in \N$ and $\alpha \in [0,1]$. Then, $\As$ is a continuous operator in 
$C^{k,\alpha}(\oa)$.
\end{Lemma}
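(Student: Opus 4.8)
The plan is to exploit the representation $\As u = -\nabla H \ast u$ established just above the statement, which reduces everything to standard facts about convolution against a fixed $L^\infty$, compactly supported kernel. Since $\nabla H \in L^\infty(\R^d)$ with $\supp \nabla H \subseteq \overline{B_1}$, and since we use the convention that functions on $\Omega$ are extended by zero to $\R^d$, for $u \in C^{k,\alpha}(\oa)$ the convolution $\nabla H \ast u$ is a well-defined bounded function on $\R^d$, hence on $\oa$. The first step is to check boundedness of the operator: $\|\As u\|_{L^\infty} \leq \|\nabla H\|_{L^\infty} \|u\|_{L^1(\Omega)} \leq C_1 \|u\|_{C^{k,\alpha}(\oa)}$, which is immediate. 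Then I would argue linearity is obvious, so continuity is equivalent to boundedness in the $C^{k,\alpha}$ norm, and it remains to control the higher derivatives and the Hölder seminorm of $\As u$.

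The second step handles the derivatives up to order $k$. Here the key point is that one differentiates the \emph{$u$-factor}, not the kernel: for a multi-index $\beta$ with $|\beta| \leq k$ one has $\partial^\beta (\nabla H \ast u) = \nabla H \ast (\partial^\beta u)$, valid because $\partial^\beta u$ is continuous on $\oa$ (extended by zero it is in $L^1(\R^d)$, though with a jump across $\partial\Omega$; the identity still holds for convolution of an $L^\infty$ function against an $L^1$ function, differentiating under the integral sign after a change of variables $\xi \mapsto x+\xi$ so that the $x$-dependence sits entirely in $u$). This gives $\|\partial^\beta \As u\|_{L^\infty} \leq \|\nabla H\|_{L^\infty}\|\partial^\beta u\|_{L^1(\Omega)}$. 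The third step is the Hölder estimate for $\partial^\beta u$ with $|\beta| = k$: writing $\partial^\beta \As u(x) - \partial^\beta \As u(x') = \int (\partial^\beta u(x+\xi) - \partial^\beta u(x'+\xi)) \nabla H(\xi)\,\text{d}\xi$ over $B_1$, one bounds the integrand by $[\partial^\beta u]_{C^{0,\alpha}}|x-x'|^\alpha |\nabla H(\xi)|$ and integrates, yielding $[\partial^\beta \As u]_{C^{0,\alpha}} \leq |B_1|\,\|\nabla H\|_{L^\infty}\,[\partial^\beta u]_{C^{0,\alpha}(\oa)}$. Collecting the three bounds gives $\|\As u\|_{C^{k,\alpha}(\oa)} \leq C_2 \|u\|_{C^{k,\alpha}(\oa)}$, which is the claim.

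The only genuine subtlety — and the point I would be most careful about — is the extension-by-zero convention interacting with differentiation: $\partial^\beta u$ extended by zero is generally discontinuous across $\partial\Omega$, so one should not claim $\partial^\beta(\nabla H \ast u)$ equals $\nabla H \ast \partial^\beta u$ by naively moving derivatives onto the (compactly supported but merely $L^\infty$) kernel. The clean way around this is the change-of-variables form $\As u(x) = -\int_{B_1} \nabla H(\xi)\, u(x+\xi)\,\text{d}\xi$: now the kernel $\nabla H$ is $x$-independent, the integrand is bounded, $\partial^\beta_x u(x+\xi)$ exists and is bounded for $x$ in any fixed compact set and $\xi \in B_1$ (since $u \in C^{k,\alpha}$ up to the boundary and we only ever evaluate on $\oa$, with the zero-extension contributing nothing problematic to the derivative estimates because the relevant bounds are one-sided on $\oa$), so differentiation under the integral sign is legitimate and the Hölder bound follows verbatim. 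With this observation the proof is entirely routine, and indeed the paragraph preceding the statement already flags it as following "through a direct calculation"; I would simply record the three estimates above.
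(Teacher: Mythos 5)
Your $k=0$ estimates (boundedness of $\As u$ and its H\"older modulus) are fine, and you correctly identify the interaction of the extension-by-zero convention with differentiation as the one delicate point. But the resolution you offer fails: the identity $\partial^\beta(\nabla H \ast u) = \nabla H \ast \partial^\beta u$, with both $u$ and $\partial^\beta u$ extended by zero, is false whenever $u$ does not vanish to order $|\beta|-1$ on $\partial\Omega$. Passing to $\As u(x) = -\int_{B_1}\nabla H(\xi)\,u(x+\xi)\,\text{d}\xi$ does not legitimise differentiating under the integral sign, because for each $x\in\oa$ the integrand $\xi\mapsto u(x+\xi)$ has a jump across the set $\xi\in(\partial\Omega-x)\cap B_1$: the difference quotients $|u(x+he_j+\xi)-u(x+\xi)|/|h|$ are of size $\|u\|_{L^\infty}/|h|$ on the boundary layer $\{\xi : \dist(x+\xi,\partial\Omega)\lesssim|h|\}$, whose measure is $\sim|h|$, so this contribution is $O(1)$ and does not disappear in the limit. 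The correct distributional derivative of the zero-extended $u$ is
\begin{equation*}
\partial_j u \;=\; (\partial_j u)\big|_{\Omega}\,\mathbf{1}_{\Omega}\;-\;u\,\nu_j\,\text{d}\sigma_{\partial\Omega},
\end{equation*}
so $\partial_j(\nabla H\ast u)$ acquires an extra single-layer potential $-\nabla H\ast(u\,\nu_j\,\text{d}\sigma_{\partial\Omega})$ that your formula drops. The cleanest counterexample is $u\equiv 1$: then $\partial_j u\equiv 0$ on $\Omega$, yet $\partial_j\As u$ is not identically zero near $\partial\Omega$ unless $F\equiv 0$. Your closing remark that the ``relevant bounds are one-sided on $\oa$'' does not fix this, because the discontinuity lives in the integration variable $\xi$, not in $x$.

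A complete proof must therefore estimate the single-layer term $x\mapsto\int_{\partial\Omega}\nabla H(y-x)\,u(y)\nu_j(y)\,\text{d}\sigma(y)$ directly, and iterate for higher $|\beta|$. Its boundedness and continuity follow from $\nabla H\in L^\infty$ with compact support and the finiteness of $\sigma(\partial\Omega)$, while its H\"older modulus is governed by the modulus of continuity of $\nabla H$ away from $0$ (hence of $F$) together with the surface measure of the symmetric difference of $\partial\Omega\cap B_1(x)$ and $\partial\Omega\cap B_1(x')$, which for smooth $\partial\Omega$ is $O(|x-x'|)$. This is precisely the ``direct calculation'' the paper gestures at without spelling out; your plan omits it entirely, and the three interior estimates you record do not by themselves yield the claim.
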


\section{Problem setting and main result}\label{SecModel}

We make the following assumptions on the coefficients and other parameters.
\begin{Assumptions} \label{assump}
\begin{align}
 &d \geq 3,\ r > \frac{d}{d-2},\ r \geq 2,\label{Assdr}\\
 &\Omega \subset \R^d\text{ is a domain with }C^3\text{ boundary and outer unit normal }\nu,\nonumber\\
 &F\in C([0,1];[0,\infty)),\nonumber\\
&\mu > 0,\nonumber\\
&c_0 \in L^r(\Omega),\nonumber
\end{align}
and 
\begin{subequations}
\begin{align}
&\D := (\dij)_{i,j = 1,\dots,d}  \text{ symmetric},\\
	&\D \in C(\oa; \R^{d\times d}),\\
	&\nabla \cdot \D \in \left(L_{\text{loc}}^{\infty}(\{\D > 0 \})\right)^{ d},\\
	&\D \geq 0,\\
	&a := \dist\left(\partial \Omega, \{\D \ngtr 0\} \right) > 0, \label{distdod}\\
	&\overline{\dim}_{\cal F}(\{\D \ngtr 0\})<d-\frac{2r}{r-1}. \label{bedd}
\end{align}
\end{subequations}	
\end{Assumptions}
 To illustrate condition \cref{bedd}, we consider two special cases where it is satisfied. The upper box dimension $\overline{\dim}_{\cal F}(\dots)$ and sets $Z_{\delta}(\dots)$ (see below) are introduced in \cref{DefUB}, see \cref{appB}.
\begin{Example}
\begin{enumerate}
	\item For a finite set $\{\D \ngtr 0\} = \{a_1,\dots,a_N\}$ with $N \in \N$ we can estimate
	\begin{align*}
		|Z_{\delta}(\{\D \ngtr 0\})| = \left| \left\{b \in \delta \Z^d: |a_i-b|_{\infty} \leq \frac{\delta}{2} \text{ for some } i = 1, \dots,N\right\}\right| \leq 2N.
\end{align*}
Then,
\begin{align*}
\overline{\dim}_{\cal F}(\{\D \ngtr 0\}) = \underset{\delta\to0}{\lim\sup}\frac{\log_2|Z_{\delta}(K)|}{\log_2\delta^{-1}} \leq \underset{\delta\to0}{\lim\sup} \frac{\log_2(2N)}{\log_2(\delta^{-1})} = 0.
\end{align*}
	\item Consider a sequence $\{a_n\}_{n\in\N}$ in $\Omega$ with $\lim_{n\to\infty} a_n = a \in \Omega$, i.e. for all $\delta >0 $ there is $N(\delta)\in \N$ s.t. $|a_n-a|< \frac{\delta}{2}$ for all $n \geq N(\delta)$. We assume that the sequence converges fast enough to its limit in the sense that there is $b < d-\frac{2r}{r-1}$ s.t.
	\begin{align}\label{limfol}
		\underset{\delta\to0}{\lim\sup} \frac{\log_2(N(\delta))}{\log_2(\delta^{-1})} = b.
	\end{align}
	If $\{\D \ngtr 0\} = \{a,a_0,a_1,\dots\}$, then
	\begin{align}
		|Z_{\delta}(\{\D \ngtr 0\})| \leq 2(N(\delta) + 1).\label{limfol2}
	\end{align}
Combining \cref{limfol,limfol2}, we arrive at \cref{bedd}.
\end{enumerate}	
\end{Example}

 We define solutions to the IBVP \cref{model} as follows.
\begin{Definition} \label{defvws}
We call a function $c \in L^r_{loc}(\oa \times [0,\infty))$ a \underline{global very weak solution} to \cref{model} if it satisfies
	\begin{align}
		&- \int_0^{\infty} \io c  \partial_t \eta dxdt - \io c_0 \eta(\cdot,0)dx\nonumber\\ 
		=&  \int_0^{\infty} \io c\D: D^2\eta  \td{x} \td{t} + \int_0^{\infty} \io c(\As c) \cdot \nabla \eta \td{x} \td{t} + \mu \int_0^{\infty} \io c (1-c^{r-1}) \eta \td{x} \td{t}\label{WeakF}
	\end{align}
for all
	\begin{align*}
	 \eta \in C_{c}^{2,1}(\oa \times [0,\infty))\qquad\text{s.t. }\nabla \eta \cdot (\D \nu) \equiv 0\text{
on }\partial \Omega \times (0,\infty).
\end{align*}
\end{Definition}

 Our main result  concerns with the existence of such solutions.
\begin{Theorem} \label{veryweaksol}
Let Assumptions \ref{assump} hold. Then, there is a very weak solution 
$$c\in L^{\infty}(0,\infty;L^1(\Omega))\cap L^{r}_{loc}(\oa\times[0,\infty))$$ to \cref{model} in the sense of \cref{defvws}.
\end{Theorem}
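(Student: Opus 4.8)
The strategy is the standard vanishing-viscosity / regularized-problem + compactness scheme, with the novelty concentrated in how the degenerate set is handled via the fractal-dimension condition \cref{bedd}. First I would construct a family of approximate problems indexed by $\varepsilon>0$: replace $\D$ by a uniformly positive definite, smooth tensor $\De := \D + \ce \mathrm{Id}$ (or a mollified/cut-off variant that is $\geq\ce>0$ and coincides with $\D$ away from an $\varepsilon$-neighbourhood of $\{\D\ngtr0\}$, so that the no-flux condition and $\nabla\cdot\De\in L^\infty_{loc}$ survive), mollify the initial datum $c_{0\varepsilon}\in C^\infty(\oa)$ with $c_{0\varepsilon}\to c_0$ in $L^r(\Omega)$ and $c_{0\varepsilon}\geq0$, and mollify $F$ if needed so that $\As$ maps into a Hölder space. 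For each fixed $\varepsilon$ the problem \cref{model} becomes a uniformly parabolic quasilinear IPDE with smooth coefficients; local classical solvability follows from standard parabolic theory (Amann / Ladyzhenskaya--Solonnikov--Uraltseva), nonnegativity of $\ce$ follows from the maximum principle (the reaction $\mu c(1-c^{r-1})$ vanishes at $c=0$), and global existence on $[0,\infty)$ will follow once the a priori bounds below are in place.

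The core of the argument is the set of $\varepsilon$-uniform estimates. Testing \cref{IPDE} with $1$ gives control of $\int_\Omega \ce$ together with $\mu\int\int \ce^{r}$ against $\mu\int\int\ce$, hence — after absorbing the lower-order term by Young's inequality and using that $\mathbb{D}_\varepsilon c$ is in divergence-of-divergence form so the diffusion term integrates to zero — a bound $\|\ce\|_{L^\infty(0,\infty;L^1(\Omega))} + \|\ce\|_{L^r_{loc}(\oa\times[0,\infty))}\leq C$ independent of $\varepsilon$; this is exactly the place where the logistic term with $r\geq2$, $r>\frac{d}{d-2}$ is used, and it is also where the balance promised in the introduction between degeneracy and growth enters — one needs \cref{LemFD} (from \cref{appB}, which I am permitted to invoke) to show that the error committed by replacing $\D$ with $\De$ near the degenerate set does not destroy the uniform $L^r$ bound, the fractal-dimension condition \cref{bedd} guaranteeing that the bad region has small enough measure (roughly, $|Z_\delta|\,\delta^{-(\text{loss})}\to0$). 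From the $L^r$ bound and \cref{DefA}/\cref{lemconvstetig} one gets $\|\As\ce\|_{L^\infty}$ bounded, hence $\ce\,\As\ce$ bounded in $L^r_{loc}$, and then the equation itself yields a uniform bound on $\partial_t\ce$ in a negative-order space $L^1_{loc}(0,\infty;(W^{2,q}_{\nu})')$ for $q$ large.

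With these bounds I would extract, along a subsequence $\varepsilon_k\to0$, weak(-$*$) limits $\ce\rightharpoonup c$ in $L^r_{loc}$ and $L^\infty(0,\infty;L^1)$. To pass to the limit in the nonlinear terms one needs strong $L^1_{loc}$ (or $L^p_{loc}$, $p<r$) convergence of $\ce$; this is supplied by an Aubin--Lions--Simon argument using the $L^r_{loc}$ bound, the $\partial_t$ bound, and a spatial-compactness ingredient. The delicate point: away from $\{\D\ngtr0\}$ the equation is uniformly parabolic so interior spatial regularity is routine, while a neighbourhood of the degenerate set has controlled-and-shrinking measure, so one localizes (cf. the sets $\{\D\geq\delta\}$, $O_s(\cdot)$, the cut-offs $\varphi_\delta$ hinted at by the macros) and combines interior compactness on $\{\D\geq\delta\}$ with the smallness of the complement. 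Once strong convergence is in hand, $\As\ce\to\As c$ strongly (continuity/compactness of the convolution operator) and $\ce\As\ce\to c\As c$ in $L^1_{loc}$; $\ce^r\to c^r$ in $L^1_{loc}$ by strong convergence plus the uniform $L^r$ bound and Vitali; and $\De\to\D$ uniformly so $\ce\De:D^2\eta\to c\D:D^2\eta$. Passing to the limit in the weak formulation of the approximate problems — which one checks coincides with \cref{WeakF} up to $\varepsilon$-errors that vanish — yields \cref{WeakF}, and the uniform bounds give the claimed regularity class $L^\infty(0,\infty;L^1(\Omega))\cap L^r_{loc}(\oa\times[0,\infty))$, completing the proof.

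The main obstacle I expect is the uniform $L^r_{loc}$ estimate: controlling the diffusion-induced error near the degenerate set using only the weak geometric information encoded in the upper box dimension, which is precisely what \cref{LemFD} is designed to deliver, and then threading that estimate through the compactness step so that strong convergence holds globally (not merely on $\{\D\geq\delta\}$) — i.e. verifying that the "bad-set" contributions to the limit passage in $c\As c$ and $c^r$ genuinely vanish as the localization parameter $\delta\to0$.
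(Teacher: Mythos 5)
Your overall scheme coincides with the paper's: regularise $\D$ to a uniformly elliptic smooth family $\De$, solve the approximate problems classically, prove $\varepsilon$-uniform bounds, extract a limit by Aubin--Lions/Vitali, and pass to the limit in the very weak formulation. The part that is genuinely off is the claimed \emph{role} of the fractal-dimension condition and \cref{LemFD}.

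You assert that the uniform $L^\infty(0,\infty;L^1)\cap L^r_{loc}$ bound is ``exactly the place'' where \cref{bedd}/\cref{LemFD} is needed, to control an ``error committed by replacing $\D$ with $\De$ near the degenerate set.'' This is not so, and the reasoning behind it is incorrect. Testing \cref{IPDEe} with the constant function $1$ annihilates \emph{both} the double-divergence diffusion term and the advection term completely, by the no-flux boundary condition, regardless of how degenerate or regularised $\D$ is; there is no diffusion-induced error to control. One gets $\frac{d}{dt}\io\ce\,\td{x}=\mu\io(\ce-\ce^r)\td{x}$, which after Gronwall yields the $L^\infty_t L^1_x$ bound and, upon integrating in time, the $L^r_{loc}$ bound. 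That step uses only $\mu>0$ and $r>1$; it is entirely elementary and involves neither $\De$ nor the geometry of $\{\D\ngtr 0\}$.

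The place where \cref{bedd}/\cref{LemFD} is actually needed — and it is the genuine crux of the paper — is the \emph{limit passage} for the nonlinear reaction term, i.e.\ showing $\cek^r\rightharpoonup c^r$ in $L^1(\Omega\times(0,T))$. Strong $L^1$ convergence of $\cek$ and the uniform $L^r$ bound give, via Vitali, strong $L^1$ convergence of $\cek^{p}$ only for exponents $p<r$; they do \emph{not} give uniform integrability of $\cek^r$ itself, so one cannot pass to the limit in $\io\cek^r\eta$ directly. The paper splits $\cek^r=\cek^r(1-\vd)+\cek^r\vd$ with the cut-offs $\vd$ of \cref{LemFD}: away from the degenerate set (the first piece) one has the stronger local bound $\|\ce\|_{L^{r+1}(B\times(0,T))}$ from the weighted energy estimate, which gives uniform integrability of $\cek^r$ and hence strong convergence there; near the degenerate set (the second piece), testing the equation with $\vd$ produces a bound on $\int\io\cen^r\vd$ whose coefficient scales like $\delta^{-2}|\supp\vd|^{1-1/r}$, and \cref{keylim} — which is precisely where $\overline{\dim}_{\cal F}(\{\D\ngtr0\})<d-\frac{2r}{r-1}$ enters — is what makes this quantity vanish as $\delta\to0$. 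You do flag ``the bad-set contributions to $c^r$'' as the main obstacle at the end, which is the right instinct, but the earlier claim that the $L^r$ a priori bound requires \cref{LemFD} would have sent you down the wrong path and should be corrected. Two smaller points: (i) $\D+\varepsilon\mathrm{Id}$ is not smooth if $\D$ is merely continuous, so the mollified variant with the inward-shifted mollification near $\partial\Omega$ is really required, both for smoothness and for the uniform $L^\infty_{loc}$ divergence bound away from the degenerate set; (ii) strong $L^1_{loc}$ convergence of $\cek$ itself is obtained \emph{globally} in the paper using only the fact (a soft consequence of \cref{bedd}) that $\{\D\ngtr 0\}$ has Lebesgue measure zero, combined with a.e.\ convergence on its complement and Vitali with the $L^r$ bound; the full quantitative fractal estimate is reserved solely for the $c^r$ term.
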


\begin{Remark} The very weak formulation \cref{WeakF} is obtained by multiple partial integration that shifts all spatial derivatives to the test function. This choice of formulation exploits the structure of the myopic diffusion. At the same time, it avoids including terms such as $\nabla\cdot(\D c)$ or $\nabla c$, for which it is likely not possible  to obtain a priori bounds in the whole domain $\Omega$ due to a combination of the degeneracy of the diffusion tensor $\D$ and the diffusion being myopic.

 Furthermore, we show in \cref{appendixvws} that sufficiently smooth very weak solutions are classical solutions to \cref{model}. This justifies our solution concept.
\end{Remark}

\section{Approximate problems}\label{SecAppr}
\subsection{Construction of  a regular matrix family \texorpdfstring{$(\De)$}{}}\label{SecDe}
We begin by constructing a family of regular matrices $(\De)$ that approximate $\D$ in a suitable fashion. For such diffusion matrices, existence of regular solutions can be directly concluded from known results. Unlike \cite{Heihoff}, we impose uniform $L^{\infty}$ boundedness of the divergence for $(\De)$  instead of convergence in an $L^p$ space for finite $p$ and make no additional restrictions  such as, e.g. vanishing normal trace, because our analysis does not require this.
\begin{Lemma} \label{lemapproxd}
Let $\Omega$ be Lipschitz.
For $\Cl[e]{e1}>0$ small enough, there is a sequence of symmetric matrices $(\De)_{\ve \in (0,\Cr{e1})} \subset C^{\infty}(\oa; \R^{d\times d})$ s.t.
\begin{alignat}{3} 
&\|\De\|_{(L^{\infty}(\Omega))^{d\times d}}\leq\ve+\|\D\|_{(L^{\infty}(\Omega))^{d\times d}}&&\qquad\text{for }\ve\in(0,\Cr{e1}),\label{estDe}\\
	&\De\geq \ve&&\qquad\text{for }\ve\in(0,\Cr{e1}),\\
	&\|\De - \D\|_{(C(\oa))^{d\times d}} \rightarrow 0 \text{ as } \ve \rightarrow 0.&&\label{detod}
\end{alignat}
Further, for every relatively open 
\begin{align*}
 B \subset\subset { \{\D>0\}}
\end{align*}
there exists some $\Cr{e2}(B)\in(0,\Cr{e1}]$ such that 
\begin{align}\label{bounddivepsb}
	\|\nabla \cdot \De\|_{(L^{\infty}(B))^d} \leq \Cl{divde}(B)\qquad\text{ for }\ve\in(0,\Cr{e2}(B)).
\end{align}
\end{Lemma}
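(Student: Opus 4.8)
The plan is to build $(\De)$ in two stages: a globally convergent mollification that secures \cref{estDe}--\cref{detod}, followed by a local correction near the degeneracy set that is needed only to keep $\De \geq \ve$ without spoiling the divergence bound \cref{bounddivepsb} away from $\{\D\ngtr0\}$. First I would extend $\D$ to a symmetric, continuous, compactly supported matrix function on all of $\R^d$, preserving $\D\geq0$ and the $L^\infty$ norm up to an arbitrarily small error (Tietze/Whitney extension, then multiply by a cutoff that is $1$ on a neighbourhood of $\oa$); since $\Omega$ is Lipschitz one can also reflect across $\partial\Omega$ if a boundary-regular extension is wanted, but a plain continuous extension suffices here because \cref{lemapproxd} asks for nothing on $\partial\Omega$ beyond smoothness of $\De$. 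Then set $\widetilde{\D}_\ve := \rho_{\sigma(\ve)} \star \D$ for a standard mollifier $\rho$, with $\sigma(\ve)\to0$ chosen so small that $\|\widetilde\D_\ve - \D\|_{(C(\oa))^{d\times d}} \leq \ve/2$ — possible by uniform continuity of the (extended) $\D$ on compacts. Mollification preserves symmetry and the inequality $\widetilde\D_\ve\geq0$ (convolution of a nonnegative-definite-valued function with a nonnegative kernel stays nonnegative definite, pointwise in the quadratic form), and it does not increase the $L^\infty$ bound on the quadratic form.

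Next I would define $\De := \widetilde\D_\ve + \ve\,\mathrm{Id}$. The shift gives $\De \geq \ve$ immediately; symmetry and smoothness are retained; \cref{estDe} holds because $\|\De\|\leq \|\widetilde\D_\ve\| + \ve \leq \|\D\|_{(L^\infty)^{d\times d}} + \ve$; and \cref{detod} follows from $\|\De-\D\|_{(C(\oa))^{d\times d}} \leq \|\widetilde\D_\ve-\D\| + \ve\sqrt d \to 0$ after a harmless relabelling of the error. It remains to check the local divergence bound \cref{bounddivepsb}. Fix a relatively open $B\subset\subset\{\D>0\}\cup\partial\Omega$. The shift term contributes nothing to the divergence. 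For the mollified term, $\nabla\cdot\widetilde\D_\ve = \rho_{\sigma(\ve)}\star(\nabla\cdot\D)$ in the sense of distributions, and by hypothesis $\nabla\cdot\D \in L^\infty_{\mathrm{loc}}(\{\D>0\})$; choosing an open set $B'$ with $B\subset\subset B'\subset\subset\{\D>0\}\cup\partial\Omega$ and then $\sigma(\ve)$ smaller than $\dist(B,\partial B')$ (this defines $\Cr{e2}(B)$), Young's inequality gives $\|\nabla\cdot\De\|_{(L^\infty(B))^d} \leq \|\nabla\cdot\D\|_{(L^\infty(B'\cap\Omega))^d}=:\Cr{divde}(B)$, where near the boundary portion of $B'$ one uses that the extension was constructed to agree with (a bounded-divergence continuation of) $\D$ on a neighbourhood of $\oa$.

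The one subtlety — and the step I expect to need the most care — is the interaction of the extension with the set $B$ touching $\partial\Omega$: I must ensure that the extended $\D$ still has $\nabla\cdot\D\in L^\infty$ on a full $\R^d$-neighbourhood of each boundary point of $B$, not merely on $\oa$. This is where the Lipschitz regularity of $\Omega$ enters: one uses a boundary-flattening chart and an even reflection of $\D$ across the (locally flattened) boundary, which keeps the tangential derivatives bounded and produces at worst a bounded jump — hence still $L^\infty$ — in the normal derivative; away from $\partial\Omega$ the extension is irrelevant since there $B\subset\subset\{\D>0\}\subset\Omega$ and the original hypothesis applies directly. Everything else is routine mollifier bookkeeping, and the constant $\Cr{e1}$ is simply chosen small enough that all the nested inclusions and the relation $\sigma(\ve)<\Cr{e2}(B)$ can be met.
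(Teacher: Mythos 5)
Your construction is genuinely different from the paper's, and the difference matters. The paper does not extend $\D$ past $\partial\Omega$ at all: it uses an Evans-style \emph{inward-shifted} mollification combined with a partition of unity subordinate to a boundary covering. Near the boundary the mollifier is evaluated at $x + \ve(L+1)z_k$ with $z_k$ pointing into $\Omega$ across a Lipschitz chart, so every value of $\D$ that enters the convolution lies strictly inside $\Omega$; the divergence bound then follows directly from $\nabla\cdot(\D\ast\eta_\ve)=(\nabla\cdot\D)\ast\eta_\ve$ applied on interior sets, and no extension of $\nabla\cdot\D$ across $\partial\Omega$ is ever needed. By contrast, you extend $\D$ to $\R^d$ and mollify the extension, which forces you to produce an extension with locally bounded weak divergence in a full $\R^d$-neighbourhood of $\partial\Omega$.

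That is exactly where the gap lies, and it is the very step you flag as the subtle one. Your proposed fix is even reflection through a flattened chart, with the claim that this ``keeps the tangential derivatives bounded and produces at worst a bounded jump in the normal derivative.'' But the hypothesis is only $\nabla\cdot\D\in L^{\infty}_{\text{loc}}(\{\D>0\})$, which bounds one specific linear combination of first-order derivatives of the entries of $\D$ and says nothing about the individual tangential or normal derivatives. In flattened half-space coordinates, even reflection gives $\widetilde d_{ij}(x',x_d)=d_{ij}(x',|x_d|)$, and for $x_d<0$ one computes
\begin{align*}
(\nabla\cdot\widetilde{\D})_i(x',x_d)=\sum_{j<d}(\partial_j d_{ij})(x',|x_d|)-(\partial_d d_{id})(x',|x_d|)=(\nabla\cdot\D)_i(x',|x_d|)-2(\partial_d d_{id})(x',|x_d|),
\end{align*}
where $\partial_d d_{id}$ is not controlled by $\nabla\cdot\D$ under the stated assumptions, so the reflected field has no a priori $L^\infty$ divergence bound. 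On top of this, since $\Omega$ is only Lipschitz the flattening chart is merely bi-Lipschitz, so even a Piola-type transform designed to preserve the divergence structure would push the weak divergence through an $L^\infty$ Jacobian, an additional delicacy your plan does not address. The rest of your argument (mollification preserving symmetry and nonnegative definiteness, the $\ve\,\mathrm{Id}$ shift for uniform ellipticity, and the interior divergence bound via Young's inequality) is fine and closely parallels the paper; I would simply replace the extension-plus-reflection step by the paper's inward-shifted mollification with a partition of unity, which disposes of the boundary issue at a stroke.
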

\begin{proof} We take a standard approach, arguing in a manner similar to the proof of Theorem 3 in Section 5.3.3 in \cite{Evans}.
\begin{proofpart} 
We need some preparation before we can proceed with a regularisation of $\D$. It concerns the domain $\Omega$. Let $x_0 \in \partial \Omega$. Since $\Omega$ has a Lipschitz boundary, there exist $\gamma \in C^{ 0,1}(\R^{d-1}{;} \R)$ and $\rho\in (0,a)$ s.t. (after relabeling and reorientation of the axes if necessary)
\begin{align}
	\Omega \cap B_{\rho}(x_0) &= \left\{ y \in B_{\rho}(x_0): y_d > \gamma (y')\right\}, \label{defmenge}\\
	\partial \Omega \cap B_{\rho}(x_0) &= \left\{ y \in B_{\rho}(x_0): y_d = \gamma (y')\right\}. \label{defrand}
\end{align}
Due to $\rho< a$, the set $\Omega \cap B_{\rho}(x_0)$ does not intersect $\{\D\ngtr 0\}$. 
Let us check that 
\begin{align}
 &\Omega \cap B_{\frac{\rho}{2}}(x_0)+B_{\ve}(\ve(L+1) e_d)\subset \Omega \cap B_{\rho}(x_0)\label{condball}
 \end{align}
 for
 \begin{align}
 \ve\leq\frac{\rho}{2(L+2)}{,}
 \label{condep1}
\end{align}
where $L$ is a Lipschitz constant for $\gamma$ in 
\begin{align*}
	B_{\rho}^{d-1}(x_0') := \left\{z' \in \R^{d-1}: \ |z'-x_0'|< \rho\right\}.
\end{align*}
Let $y\in \Omega \cap B_{\frac{\rho}{2}}(x_0)$ and $z\in B_{\ve}(\ve(L+1) e_d)$. Due to \cref{condep1}, it holds that
\begin{align}
 |y+z-x_0|\leq &|y-x_0|+|z-\ve(L+1) e_d|+\ve(L+1) \nonumber\\
 <&\frac{\rho}{2}+(L+2)\ve\nonumber\\
 \leq&\rho.\label{inball}
\end{align}
 As $z\in B_{\ve}(\ve(L+1) e_d)$ implies that $|z'|<\varepsilon$ and $z_d > \varepsilon(L+1)-\ve$, the Lipschitz continuity of $\gamma$ and \cref{defmenge} together imply
\begin{align}
 \gamma(y'+z')\leq &\gamma(y')+L|z'|\nonumber\\
 <&y_d+\ve L\nonumber\\
 <&y_d+z_d-\ve((L+1)-1)+\ve L\nonumber\\
 =&y_d+z_d.\label{inOm}
\end{align}
Combining \cref{inball,inOm,defmenge}, we arrive at \cref{condball}.

 Due to compactness of $\overline{\Omega}$, we can find some constants $\rho\in(0,a)$, $L>0$, and $k\in\N$ and  points $x_k \in \partial{\Omega}$ and $z_k\in S_1(0)$, $k\in\{1,\dots,K\}$, such that
\begin{align}
	&{\Omega\cap}B_{\frac{\rho}{2}}(x_k)+B_{\ve}(\ve(L+1) z_k)\subset B_{\rho}(x_k)\cap\Omega\qquad \text{for }k\in\{1,\dots,K\},\ \ve\in\left(0,{\frac{\rho}{2(L+2)}}\right),\label{condballk}
\end{align}
and
\begin{align*}
	&\partial\Omega\subset \bigcup_{k=1}^K B_{\frac{\rho}{2}(x_k)}.
\end{align*}
Let 
\begin{align*}
 A_0:=\oa\backslash \left(\bigcup_{n=1}^K B_{\frac{\rho}{2}(x_k)}\right).
\end{align*}
This set is compact  and satisfies {$$\rho_0 := \text{dist}(A_0,\partial \Omega)>0.$$}
By Theorem 3.15 in \cite{Adams}, there is a partition of unity $\{\psi_k\}_{k=0}^K$ subordinate to the open covering 
\begin{align}
O_{\frac{\rho_0}{2}}(A_0),\   \{B_{\frac{\rho}{2}}(x_k)\}_{k=1}^{K}\label{cov}                                                                   \end{align}
of $\overline\Omega$, i.e. a set of functions that satisfies
\begin{subequations}\label{psis}
\begin{alignat}{3}
&\psi_0\in C_{ c}^{\infty}(O_{\frac{\rho_0}{2}}(A_0)),&&\\
&\psi_k \in C_{ c}^{\infty}(B_{\frac{\rho}{2}}(x_k))&&\qquad\text{for }k\in\{1,\dots,K\},\\
&0\leq \psi_k \leq 1 &&\qquad\text{for }k\in\{0,\dots,K\},\\
&\sum_{k=0}^K \psi_k = 1 \qquad\text{in }\oa.&&                                                                                                                                                                                                                                                                                                                                                             \end{alignat}
\end{subequations}
\end{proofpart}
\begin{proofpart}
 Now we can proceed with the construction of approximations for $\D$. Set
\begin{align}
  \De(x):=\ve I_d+\psi_0{ (x)}({ \eta_{\ve} \ast \D})(x)+\sum_{k=1}^K\psi_k{ (x)}({ \eta_{\ve} \ast \D}) (x + \ve(L+1)z_k)\qquad\text{for }x\in\oa,\ \ve\in\left(0,\Cr{e1}\right),\label{DefDe}
\end{align} 
{where $$\Cr{e1} := \min\left\{\frac{\rho}{2(L+2)},{\frac{\rho_0}{3}}\right\},$$}$\eta$ denotes the standard mollifier{,} and $\eta_\ve(y) := \frac{1}{\ve^d}\eta(\frac{y}{\ve})$.
Obviously, $\De \in C^{\infty}(\oa, \R^{d\times d})$, is symmetric in $\oa$, and $\De \geq \ve$. Using Young's inequality for convolutions and properties of $\eta_{\ve}$  and $\psi_k$s, one readily obtains \cref{estDe}. \\ 
Further, thanks to \cref{condballk} we can exploit uniform continuity of $\D$ in $\oa$, yielding 
\begin{align}
  &\max_{x\in \overline{B_{\frac{\rho}{2}(x_k)}\cap \Omega}}\left|({ \eta_{\ve} \ast \D})(x + \ve(L+1)z_k)-\D(x)\right|\\
  =&\max_{x\in \overline{B_{\frac{\rho}{2}(x_k)}\cap \Omega}}\left|\int_{B_{\ve}(0)}\eta_{\ve}(y)(\D(x+\ve(L+1)z_k-y)-\D(x))\,dy\right|\nonumber\\
  \leq&\max_{(x,y)\in \overline{B_{\frac{\rho}{2}(x_k)}\cap \Omega}\times  \overline{B_{\ve}(0)}}|(\D(x+\ve(L+1)z_k-y)-\D(x))|
  \underset{\ve\to0}{\to}0\qquad\text{for }k\in\{1,\dots,K\}.\label{conv1}
\end{align}
Since $\overline{O_{ \frac{\rho_0}{2}}(A_0)}\subset\Omega$, we also have (e.g. due to Theorem 2.29(d) in \cite{Adams})
\begin{align}
  { \eta_{\ve} \ast \D}\underset{\ve\to0}{\to}\D\qquad\text{in }\left(C(\overline{O_{ \frac{\rho_0}{2}}(A_0)})\right)^{d\times d}.\label{conv0}
\end{align}
Combining \cref{conv0,conv1,psis}, we arrive at
\begin{align*}
  \De\underset{\ve\to0}{\to}\D\qquad\text{in }(C(\oa))^{d\times d}.
\end{align*}
\end{proofpart}
\begin{proofpart} 
It remains to verify \cref{bounddivepsb}.
Consider some relatively  open set 
$B \subset\subset { \{\D>0\}}$ and assume 
\begin{align}
 &B \subset B_{\frac{\rho}{2}}(x_k)\label{AssB}
\end{align}
for some $k\in\{1,\dots,K\}$. Then,
\begin{subequations}\label{supp1}
\begin{align}
 &\overline{B}+\overline{B_{\ve}(0)}+\ve(L+1)z_k\subset \overline{O_{\ve(L+2)} (B)}{\cap \oa} \subset\subset \{\D>0\}\cap B_{\rho}(x_k)\qquad\text{for }\ve\in(0,\Cr{e2}(B)),\\
 &\Cl[e]{e2}(B):=\min\left\{\frac{\rho}{2(L+2)},\frac{1}{{2}(L+2)}\dist(B,\{\D \ngtr 0\}),{\frac{\rho_0}{3}}\right\}.
\end{align}
\end{subequations}
Let $\varphi\in C^{\infty}_{ c}(B{ \setminus \partial \Omega})$. By \cref{supp1}, the definition of weak divergence and properties of convolutions and $\eta_{\ve}$ we have
\begin{align}
	\id \nabla \cdot ({ \eta_{\ve} \ast \D})(x + \ve(L+1)z_k) \varphi \td x =&
  -\id({ \eta_{\ve} \ast \D})(x + \ve(L+1)z_k)\nabla\varphi(x)\td{x}\\
  =&-\id \D(x) (\eta_{\ve}\ast\nabla\varphi)(x-\ve(L+1)z_k)\td{x}\nonumber\\
  =&-\id \D(x) \nabla((\eta_{\ve}\ast\varphi)(x-\ve(L+1)z_k))\td{x}\nonumber\\
  =&\id \nabla\cdot \D(x) (\eta_{\ve}\ast\varphi)(x-\ve(L+1)z_k)\td{x}\nonumber\\
  =&\int_{O_{\ve(L+2)}{ (B) \cap \Omega}}\nabla\cdot \D(x) (\eta_{\ve}\ast\varphi)(x-\ve(L+1)z_k)\td{x},\nonumber
\end{align}
so that
\begin{align}
 \left|\id \nabla \cdot ({ \eta_{\ve} \ast \D})(x + \ve(L+1)z_k) \varphi \td x\right|_{\infty}\leq&\left\|\nabla\cdot\D\right\|_{(L^{\infty}(O_{\ve(L+2)}{ (B) \cap \Omega}))^d}\|\varphi\|_{L^1(B)}\nonumber\\\leq&{\left\|\nabla\cdot\D\right\|_{(L^{\infty}(O_{\ve_2(B)(L+2)}(B) \cap \Omega))^d}\|\varphi\|_{L^1(B)}}.\nonumber
\end{align}
Consequently, due to the density of $C_{ c}^{\infty}$ in $L^1$ and as $(L^1)^* = L^{\infty}$, the estimate
\begin{align}
 \left\|\nabla\cdot({ \eta_{\ve} \ast \D})(\cdot+\ve(L+1)z_k))\right\|_{(L^{\infty}(B))^d}\leq {\sqrt{d}}\left\|\nabla\cdot\D\right\|_{(L^{\infty}(O_{{ \ve_2(B)}(L+2)}{ (B) \cap \Omega}))^d}\qquad\text{for }\ve\in(0, \Cr{e2}(B))\label{estdiv1}
\end{align}
follows. A similar argument works for ${ \eta_{\ve} \ast \D}$, yielding 
\begin{align}
 \left\|\nabla{\cdot}({ \eta_{\ve} \ast \D})\right\|_{(L^{\infty}(B))^d}\leq {\sqrt{d}}  \left\|\nabla\cdot\D\right\|_{(L^{\infty}({ O_{\ve_2(B)}(B)}))^d}\qquad\text{for }\ve\in(0,\Cr{e2}(B)).\label{estdiv2}
\end{align}
Combining \cref{DefDe,estdiv1,estdiv2,estDe} and using the product rule, we arrive at  \cref{bounddivepsb}. The case $B \subset \subset \{\D > 0\}$ { and $B \subset O_{\frac{\rho_0}{2}}(A_0)$} can be handled analogously. In general, any $B \subset\subset {\{\D>0\}}$ {can be described as the union of at most $K+1$  sets such that each of them is} fully contained in (at least) one of the elements of the open covering \cref{cov}. {Therefore,} \cref{bounddivepsb} still holds if we choose $\Cr{e2}(B)$ and $\Cr{divde}(B)$ to be sufficiently small and large, respectively.
\end{proofpart}
\end{proof}

\subsection{Existence of a global classical solution to the approximate problem}
Let $\alpha \in (0,\frac{1}{2})$. Since $C_c^{\infty}(\Omega)$ is dense in  $L^2(\Omega)$, there is a sequence $(\coe)_{\ve\in(0,\Cr{e1})} \subset C^{2+\alpha}(\oa)$ and a constant $\Cl{coeb}$ satisfying 
\begin{alignat}{3}
&(\nabla \cdot (\De \coe) - \coe\As \coe)\cdot \nu = 0&&\qquad\text{in }\partial\Omega,\nonumber\\
&\coe\underset{\ve\to0}{\to}c_0&&\qquad\text{in }L^2(\Omega),\label{convc0}\\
&\|\coe\|_{L^2(\Omega)}\leq \Cr{coeb}&&\qquad\text{for all }\ve>0.\nonumber
\end{alignat}
With the help of the diffusion tensors constructed in \cref{SecDe} we formulate for $\ve \in (0,\Cr{e1})$ the approximate problems 
\begin{subequations}\label{modelapprox}
\begin{alignat}{3}
    &\partial_t \ce = \nabla \nabla : (\De\ce) - \nabla \cdot (\ce \As \ce ) + \mu \ce (1-\ce^{r-1}) &&\qquad\text{in }{\Omega\times [0,\infty)},\label{IPDEe}\\
    &(\nabla \cdot (\De \ce)- \ce\As \ce)\cdot \nu = 0 &&\qquad\text{in }{\partial\Omega\times [0,\infty)},\label{bce}\\
&\ce = \coe &&\qquad \text{in }{\Omega\times \{0\}}.
\end{alignat}
\end{subequations}
The subsequent \cref{LemLoc,LemGl} provide local and then global existence of classical solutions to \cref{modelapprox}.
\begin{Lemma}\label{LemLoc}
{Let $\alpha \in (0,\frac{1}{2})$.} For $\ve \in (0,\Cr{e1})$ there is a maximal existence time $\Tmaxe \in (0,\infty]$ and a nonnegative classical solution $\ce \in C^{2{+\alpha},1{+\frac{\alpha}{2}}}(\oa \times [0,\Tmaxe))$ of \cref{modelapprox}. It holds that either $\Tmaxe = \infty$ or $\Tmaxe < \infty$ and 
\begin{align}\label{limceta}
	\lim_{t \nearrow \Tmaxe}\|\ce(\cdot,t)\|_{C^{2+\alpha}(\oa)}=\infty.
\end{align}
\end{Lemma}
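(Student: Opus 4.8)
The plan is to recast \cref{modelapprox} as a semilinear, uniformly parabolic problem with a regular oblique boundary condition and then solve it locally by a contraction mapping, the point being that the nonlocal operator $\As$ enters only as a bounded lower-order perturbation on the H\"older scale. Expanding the myopic term as $\nabla\nabla:(\De c)=\De:D^2c+2(\nabla\cdot\De)\cdot\nabla c+(\nabla\nabla:\De)c$ and the adhesion flux as $\nabla\cdot(c\As c)=(\As c)\cdot\nabla c+c\,\nabla\cdot(\As c)$, equation \cref{IPDEe} becomes $\partial_tc-\De:D^2c=g[c]$ with $g[c]:=\big(2\nabla\cdot\De-\As c\big)\cdot\nabla c+\big(\nabla\nabla:\De-\nabla\cdot(\As c)+\mu(1-c^{r-1})\big)c$, while, by the symmetry of $\De$, the flux condition \cref{bce} reads $\nabla c\cdot(\De\nu)=h[c]$ on $\partial\Omega$ with $h[c]:=-c\big(\nabla\cdot\De-\As c\big)\cdot\nu$. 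Since $\De\in C^{\infty}(\oa;\R^{d\times d})$ and $\De\geq\ve$, the operator $\partial_t-\De:D^2$ is uniformly parabolic with smooth coefficients and $(\De\nu)\cdot\nu\geq\ve>0$, so this is a regular oblique derivative problem. As $c^{r-1}$ need not be defined for sign-changing $c$, I would first replace it throughout by $(c_+)^{r-1}$ and remove this modification a posteriori once nonnegativity is established.

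For the local result, for $T\in(0,1]$ let $v\in\cza(\oa\times[0,T])$ be the solution of the linear problem obtained by replacing $c$ by $\coe$ in the nonlinear terms above, i.e.\ $\partial_tv-\De:D^2v=g[\coe]$, $\nabla v\cdot(\De\nu)=h[\coe]$, $v(\cdot,0)=\coe$; it exists, with norm bounded independently of $T\le1$, by linear parabolic Schauder theory \cite{Lady}, the zeroth-order compatibility condition $h[\coe]=\nabla\coe\cdot(\De\nu)$ being precisely the boundary condition that $\coe$ was chosen to satisfy. On the set $X_T$ of all $\bar c\in\cza(\oa\times[0,T])$ with $\bar c(\cdot,0)=\coe$ and $\|\bar c-v\|_{\cza(\oa\times[0,T])}\le1$, define $\Phi(\bar c)$ to be the solution of the \emph{linear} problem $\partial_tc-\De:D^2c=g[\bar c]$, $\nabla c\cdot(\De\nu)=h[\bar c]$, $c(\cdot,0)=\coe$. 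By \cref{lemconvstetig}, $\As$ maps $C^{k,\alpha}(\oa)$ boundedly into itself, so $g[\bar c]\in C^{\alpha,\frac{\alpha}{2}}(\oa\times[0,T])$ and $h[\bar c]\in\cea(\partial\Omega\times[0,T])$ depend Lipschitz-continuously on $\bar c\in X_T$, and Schauder theory again produces a unique $\Phi(\bar c)\in\cza(\oa\times[0,T])$. Since $\Phi(\bar c)-v$ and $\Phi(\bar c_1)-\Phi(\bar c_2)$ vanish at $t=0$, the corresponding Schauder estimates gain a positive power of $T$, so for $T$ small enough --- depending only on $\ve,\mu,r,\Omega$ and an upper bound for $\|\coe\|_{\czao}$ --- $\Phi$ maps $X_T$ into itself and is a contraction there. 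Its fixed point is a classical solution of the modified problem on $[0,T]$, unique in $\cza$. This step can also be quoted directly from standard results on semilinear parabolic problems with oblique boundary conditions.

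To obtain nonnegativity, choose the approximations $\coe\ge0$ (compatible with $\coe\to c_0$ in $L^2$, as $c_0\ge0$) and note that $c\equiv0$ solves both the equation and the boundary condition; since the constructed solution $\ce$ is bounded on $[0,T]$, it solves the linear equation $\partial_tu=\De:D^2u+\big(2\nabla\cdot\De-\As\ce\big)\cdot\nabla u+\big(\nabla\nabla:\De-\nabla\cdot(\As\ce)+\mu(1-(\ce)_+^{r-1})\big)u$ with bounded continuous coefficients, together with the Robin condition $\nabla u\cdot(\De\nu)+\big(\nabla\cdot\De-\As\ce\big)\cdot\nu\,u=0$, so the parabolic maximum principle gives $\ce\ge0$; then $(\ce)_+^{r-1}=\ce^{r-1}$ and $\ce$ solves \cref{modelapprox}. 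Letting $\Tmaxe$ be the supremum of the times up to which such a classical solution exists --- the overlapping solutions agreeing by uniqueness --- yields a maximal solution on $[0,\Tmaxe)$. If $\Tmaxe<\infty$ while $M:=\limsup_{t\nearrow\Tmaxe}\|\ce(\cdot,t)\|_{\czao}<\infty$, pick $t_0<\Tmaxe$ with $\|\ce(\cdot,t_0)\|_{\czao}\le M+1$ and restart the local construction from $\ce(\cdot,t_0)$, which lies in $\czao$ and satisfies the compatibility condition since the boundary condition holds at $t_0$; because the local existence time depends on the datum only through an upper bound for its $\czao$-norm, it is some $\tau=\tau(\ve,M+1,\mu,r,\Omega)>0$ independent of $t_0$, and choosing $t_0>\Tmaxe-\tau$ extends $\ce$ beyond $\Tmaxe$, a contradiction. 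Hence \cref{limceta} holds.

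The single genuinely delicate point is this last uniformity: one must verify that the existence time furnished by the local step depends on the initial datum only through a bound for its $\czao$-norm. Everything else is routine; in particular, the nonlocality of $\As$ causes no trouble, since by \cref{lemconvstetig} it behaves on the H\"older scale exactly like a bounded lower-order term, affecting neither the linear solvability nor the fixed-point estimates.
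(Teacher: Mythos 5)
Your argument is correct and reaches the same conclusion as the paper, but by a genuinely different fixed-point mechanism. You set up a Banach contraction in the strong space $\cza(\oa\times[0,T])$ around the solution $v$ of the frozen-coefficient problem, exploiting the fact that differences vanishing at $t=0$ gain a power $T^{\alpha/2}$ in the lower-order H\"older norm via interpolation. The paper instead defines the self-map $F$ on the closed convex set $S$ of \emph{nonnegative} functions bounded in $\cea$, uses the compact embedding $\cza\subset\subset\cea$ to make $F$ compact, and invokes Schauder's fixed-point theorem. Your contraction route buys local uniqueness in $\cza$ outright, which is exactly what makes the restart argument (solutions on overlapping intervals coincide) airtight; the paper leaves this implicit. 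In exchange, you must truncate the power nonlinearity to $(c_+)^{r-1}$ before the fixed point and undo the truncation afterwards, whereas the paper's set $S$ consists of nonnegative functions by definition, so $\bar c^{\,r-1}$ is defined from the outset and the maximum principle is only used once, to show $F$ preserves nonnegativity. Both proofs rely on the same two inputs --- linear parabolic Schauder theory for a uniformly parabolic, regular oblique problem, and \cref{lemconvstetig} to treat $\As$ as a bounded lower-order coefficient on the H\"older scale --- and both require the observation you flag at the end, namely that the local existence time depends on the data only through a $\czao$ bound, which is also how the paper derives \cref{limceta}.

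One small point worth making explicit: for the contraction you need Lipschitz dependence of $g[\cdot]$ on $\bar c$ in the relevant norms, and the term $\mu(1-(\bar c_+)^{r-1})\bar c$ is locally Lipschitz precisely because $r\geq 2$ guarantees $r-1\geq 1$; for $1<r<2$ the truncated map $\bar c\mapsto(\bar c_+)^{r-1}$ would fail to be Lipschitz near zero. Since \cref{assump} imposes $r\geq 2$, this causes no trouble, but it is the reason your truncation trick is painless here.
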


\begin{proof}
The proof is based on a standard fixed-point argument.

 Let $\ve \in (0,{\Cr{e1}})$ and $\alpha\in(0,\frac{1}{2})$, as above, $T{ \in (0,1)}$ small enough (to be determined later), and 
 $$M:=\|\coe\|_{\czao} + 1.$$
 We define the set
\begin{align*}
	S := \left\{\bc \in \cea (\oa \times [0,T];[0,\infty)) :\quad  \|\bc\|_{\cea(\oa \times (0,T))} \leq M\right\}.
\end{align*}
For $\bc \in S$ we consider the linearised IBVP 
\begin{subequations}\label{modelapproxf}
\begin{alignat}{3}
    &\partial_t \ce = \nabla \nabla : (\De\ce) - \nabla \cdot (\ce \As \bc ) + \mu \ce (1-\bc^{r-1}) &&\qquad\text{in }{\Omega\times [0,\infty)},\\
    &(\nabla \cdot (\De \ce)- \ce\As \bc)\cdot \nu = 0 &&\qquad\text{in }{ \partial\Omega \times [0,\infty)},\label{bcef}\\
&\ce = \coe &&\qquad \text{in }{\Omega \times \{0\}}.
\end{alignat}
\end{subequations}
Due to \cref{lemconvstetig,lemapproxd}, the coefficient functions and the initial data are smooth enough and satisfy the compatibility condition{, allowing us to conclude with} Theorem IV.5.3 in \cite{Lady} that \cref{modelapproxf} has a unique solution $\ce \in \cza(\oa\times[0,T])$ {for which}
\begin{align*}
\|\ce\|_{\cza(\oa\times[0,T])} \leq \Cl{boundcza}(M,T)\|\coe\|_{\czao}
\end{align*}
for some constant $\Cr{boundcza}(M,T)>0$ independent of the choice of  $\bc$. From Theorem 13.5 (and the remark at the end of Chapter 13 in the book to include some dependence of the coefficients on $t$) in \cite{KochDa} we conclude that $\ce \geq 0$. A straightforward calculation leads to the estimate
\begin{align}
	\|\ce\|_{\cea(\oa\times[0,T])} 
	\leq& \|\ce - \coe\|_{\cea(\oa\times[0,T])} + \|\coe\|_{{ C^{1+\alpha}(\oa)}}\nonumber \\
	\leq& {2\max\{1,\Cr{embsob}\} }T^{\frac{\alpha}{2}} \|\ce\|_{\cza(\oa\times[0,T])} + \|\coe\|_{\czao},\label{Hest}
\end{align}
{where $\Cl{embsob}$ denotes the Sobolev embedding constant from $W^{1,\infty}(\Omega)$ into $C^{\alpha}(\oa)$.}
Hence, {taking $$T \leq \left (\frac{1}{{ 2\max \{1,C_{4}\}} C_{3}(M,T)\|c_{0\varepsilon }\|_{C^{2+\alpha }(\overline{\Omega})}}
\right )^{\frac{2}{\alpha }}$$ we ensure that} $\ce \in S$. 
Consequently, the operator 
\begin{align}\label{defopf}
F: {S \rightarrow S},\qquad \bc \mapsto \ce                                                                                                 
\end{align}
is a well-defined and{,} due to $\cza(\oa \times [0,T])\subset\subset\cea(\oa\times[0,T])${, compact} self-map. Moreover, the continuous dependence of $\ce$ from the coefficients (that follows with \cref{lemconvstetig} and the proof of Theorem IV.5.3 in \cite{Lady}) implies that $F$ is a continuous and compact operator. 
Now, Schauder's fixed point theorem applies, providing a fixed point $\ce \in \cea(\oa\times[0,T])$ of $F$ that is also in $\cza(\oa\times[0,T])$ and a classical solution to \cref{modelapprox} on $\oa\times [0,T]$.

 Extending the solution to its maximal existence time $\Tmaxe$, it holds that either $\Tmaxe = \infty$ or $\Tmaxe<\infty$ and then \cref{limceta}.
\end{proof}

 Next, we verify global existence for \cref{modelapprox}.
\begin{Lemma}\label{LemGl}
{Let $\alpha \in (0,\frac{1}{2})$.} For $\ve \in (0,\Cr{e1})$ there is a nonnegative classical solution $\ce \in C^{2{+\alpha},1{+\frac{\alpha}{2}}}(\oa \times [0,\infty))$ of \cref{modelapprox}. 
\end{Lemma}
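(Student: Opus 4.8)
The plan is to upgrade the local classical solution from \cref{LemLoc} to a global one by ruling out finite-time blow-up of the $C^{2+\alpha}(\oa)$ norm, which by \cref{limceta} is the only obstruction. The strategy is the usual bootstrap: first derive an a priori bound in a low-order norm that uses the dissipative structure of the logistic term, then use parabolic regularity theory (the Schauder and $L^p$ estimates from \cite{Lady}) on the linear problem \cref{modelapproxf} read with $\bc=\ce$ to propagate this into the higher-order H\"older norm on any finite time interval $[0,T]$, with bounds that stay finite as long as $T<\infty$.

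First I would establish the basic $L^1$ (mass) and $L^r$ estimates. Testing \cref{IPDEe} with $1$ and using the no-flux condition \cref{bce}, the diffusion and adhesion terms integrate to zero (this is exactly the structure exploited in the very weak formulation \cref{WeakF}), leaving $\frac{d}{dt}\io\ce=\mu\io\ce(1-\ce^{r-1})$; by H\"older/Jensen on the bounded domain $\Omega$ this gives a differential inequality of the form $\frac{d}{dt}\io\ce\le\mu\io\ce-\Cl{cc}\left(\io\ce\right)^{r}$, hence a uniform-in-time bound $\|\ce(\cdot,t)\|_{L^1(\Omega)}\le\Cl{cc2}(\ve)$ for all $t<\Tmaxe$. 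Since $\ce\ge0$ and for $\varepsilon$ fixed the diffusion tensor $\De\ge\ve$ is uniformly elliptic and smooth, I would next test with $\ce^{r-1}$ (or obtain it from a standard $L^p$ energy estimate for the nondegenerate problem) to get $\ce\in L^\infty_{loc}(L^r)\cap L^r_{loc}(W^{1,r})$-type control on $[0,\Tmaxe)$; here the adhesion term is handled using that $\As\ce$ is bounded in $L^\infty$ by $\Cr{cc2}(\ve)$ via \cref{DefA} and the $L^1$ bound, and the $-\mu\ce^{r}$ term only helps. One then bootstraps: from an $L^p$ bound on $\ce$ with $p$ large, the advective term $\nabla\cdot(\ce\As\ce)$ and the source $\mu\ce(1-\ce^{r-1})$ lie in fixed $L^q$ spaces on $[0,T]$, so parabolic $L^q$ regularity for the nondegenerate operator $\partial_t-\nabla\nabla:(\De\,\cdot\,)$ with the oblique boundary condition \cref{bce} gives $\ce\in W^{2,1}_q$, hence by Sobolev embedding $\ce\in C^{\beta,\beta/2}$, hence the coefficients of the linearised problem become H\"older, and Schauder estimates (Theorem IV.5.3 in \cite{Lady}, as in \cref{LemLoc}) yield a bound on $\|\ce\|_{\cza(\oa\times[0,T])}$ depending only on $T$, $\ve$, and the data. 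This contradicts \cref{limceta} unless $\Tmaxe=\infty$.

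The main obstacle is the first step: extracting a genuine a priori bound that is not merely local in time. Because the diffusion is \emph{myopic} (the term is $\nabla\nabla:(\De\ce)$, not $\nabla\cdot(\De\nabla\ce)$), testing with powers of $\ce$ produces, besides the good dissipation, an extra term involving $\nabla\cdot\De$ times $\ce^{p-1}\nabla\ce$ — for fixed $\ve$ this is controlled since $\De\in C^\infty$, but one must keep track that the resulting constants may depend on $\ve$ (this is acceptable here: \cref{LemGl} is stated for fixed $\ve$, and the uniform-in-$\ve$ estimates are the business of later sections). The adhesion term $\nabla\cdot(\ce\As\ce)=\ce\nabla\cdot\As\ce+\nabla\ce\cdot\As\ce$ is the other place to be careful: $\nabla\cdot\As\ce$ is only bounded in terms of $\|\ce\|_{L^1}$ (it equals $-\Delta H\star\ce$ and $\Delta H$ is a bounded measure plus an $L^\infty$ part, or one simply uses $\As\ce\in W^{1,\infty}$ when $\ce\in L^p$), but the logistic absorption term $-\mu\ce^{r}$ with $r\ge2$ dominates the resulting lower-order growth, which is precisely why the source term was included. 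Once the time-uniform $L^1$ bound and the local-in-time $L^r$ bound are in hand, the remainder is the routine parabolic bootstrap sketched above, invoking the same Schauder machinery already used in the proof of \cref{LemLoc}.
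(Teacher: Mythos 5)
Your proposal is correct and follows essentially the same strategy as the paper: assume $\Tmaxe<\infty$, derive an $L^1$ bound on $[0,\Tmaxe)$ via integrating the equation, then upgrade to $L^p$ bounds for arbitrary $p$ using the nondegeneracy $\De\geq\ve$, the boundedness of $\nabla\cdot\De$, and the $L^\infty$ control on $\As\ce$ from the $L^1$ bound, and finally bootstrap to $C^{2+\gamma,1+\gamma/2}$ so as to contradict the blow-up criterion \cref{limceta}. The only cosmetic difference is in the final regularity chain: you invoke maximal $L^q$ parabolic regularity plus Sobolev embedding to reach $C^{\beta,\beta/2}$ before applying Schauder, whereas the paper goes $L^\infty \to C^{\gamma,\gamma/2}$ via \cite{DiBenedetto}, then $C^{1+\gamma,(1+\gamma)/2}$ via \cite{Lieberman1987}, then $C^{2+\gamma,1+\gamma/2}$ via \cite{Lady}; both routes are standard and lead to the same contradiction.
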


\begin{proof}
We follow a standard approach which is based on excluding the possibility of \cref{limceta}.
Let $\ve \in (0,\Cr{e1})$ and assume $\Tmaxe < \infty$, so that $\ce$ is a solution to \cref{modelapprox} in $\oa\times[0,\Tmaxe)$.
Integrating the first equation in \cref{modelapprox} over $\Omega$, we 
conclude from Gronwall's inequality that
\begin{align} \label{boundce}
	\|\ce\|_{L^{\infty}(0,\Tmaxe;L^1(\Omega))} \leq e^{\mu\Tmaxe}\|\coe\|_{L^1(\Omega)} =: \Cl{emtco}(\Tmaxe).
\end{align}
{A standard argument based on the propagation of $L^p$ bounds and the fact that $\nabla H$ and $\nabla \cdot \De$ are essentially bounded ensures that 
\begin{align*}
 \ce\in L^{\infty}(\Omega\times (0,\Tmaxe)).
\end{align*}
Due to non-degeneracy of the equation ($\De \geq \ve$) this, in turn, readily implies
\begin{align*}
 \ce \in L^2(0,\Tmaxe;H^1(\Omega))\cap C([0,\Tmaxe];L^2(\Omega)).
\end{align*}
}%
Now, Theorem 4 from \cite{DiBenedetto} applies and yields $$\ce \in C^{\gamma,\frac{\gamma}{2}}(\oa \times [0,\Tmaxe])$$ for some $\gamma \in (0,\alpha)$. Next, using the H\"older continuity of the coefficients (which holds especially due to \cref{lemconvstetig}) again, 
{we can avail of Theorem 1.2 from \cite{Lieberman1987} which implies
\begin{align*}
 \ce\in& C^{1+\gamma,\frac{1+\gamma}{2}}(\oa \times [0,\Tmaxe])\\
 \subset& C^{\alpha,\frac{\alpha}{2}}(\oa \times [0,\Tmaxe]).
\end{align*}
Yet another application of this theorem now provides
\begin{align*}
 \ce\in& C^{1+\alpha,\frac{1+\alpha}{2}}(\oa \times [0,\Tmaxe]).
\end{align*}
Finally,} Theorem IV.5.3 from \cite{Lady} yields $$\ce \in C^{2+{ \alpha}, 1+\frac{{ \alpha}}{2}}(\oa \times [0,\Tmaxe]),$$ 
 contradicting \cref{limceta}.
\end{proof}

\section{Existence of a very weak solution to the original problem}\label{SecEx}
In this section, we  show the convergence of a suitably chosen sequence of the classical solutions to \cref{modelapprox} to a very weak solution to \cref{model} in the sense of \cref{defvws}. We start with some basic uniform estimates of $(\ce)$.
\begin{Lemma} \label{lemboundlelr}
For all $\ve \in (0,\Cr{e1})$ it holds that 
\begin{align}
	&\|\ce\|_{L^{\infty}(0,\infty;L^1(\Omega))}\leq\Cl{Cr3},\label{BndCr3}\\
	&\|\ce\|_{L^r(\Omega\times (0,T))} \leq \Cl{Cr4}+T{ \mu}\Cr{Cr3}\qquad\text{for }T>0.\label{BndCr32}
\end{align} 
\end{Lemma}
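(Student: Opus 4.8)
The plan is to derive both bounds directly from the approximate problem \cref{modelapprox}, using only mass-type estimates that are uniform in $\ve$. This is exactly the global-in-time analogue of the a priori estimates already used in the proof of \cref{LemGl}, except that there the constants were allowed to blow up as $\Tmaxe\to\infty$, whereas here the logistic absorption term must be exploited to keep everything uniform on $(0,\infty)$.

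First I would establish \cref{BndCr3}. Integrate \cref{IPDEe} over $\Omega$: the myopic diffusion term $\nabla\nabla:(\De\ce)$ and the adhesion term $-\nabla\cdot(\ce\As\ce)$ both integrate to zero against the no-flux boundary condition \cref{bce} (this is precisely the mass-conservation structure of the model), leaving
\begin{align*}
\frac{d}{dt}\io\ce\td x=\mu\io\ce(1-\ce^{r-1})\td x=\mu\io\ce\td x-\mu\io\ce^r\td x.
\end{align*}
Since $\ce\geq0$ by \cref{LemGl}, the map $s\mapsto s-s^r$ on $[0,\infty)$ is bounded above, say by some constant depending only on $r$; more usefully, by Jensen's (or H\"older's) inequality $\io\ce^r\td x\geq|\Omega|^{1-r}\left(\io\ce\td x\right)^r$, so with $y(t):=\io\ce(\cdot,t)\td x$ we get $y'\leq\mu y-\mu|\Omega|^{1-r}y^r$. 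This is a scalar logistic-type ODI with a globally attracting, finite equilibrium, so $y(t)\leq\max\{y(0),(|\Omega|^{r-1})^{1/(r-1)}\}=\max\{\|\coe\|_{L^1(\Omega)},|\Omega|\}$ for all $t\geq0$; together with the uniform bound on $\|\coe\|_{L^2(\Omega)}$ from \cref{convc0} (hence on $\|\coe\|_{L^1(\Omega)}$ via H\"older on the bounded domain $\Omega$), this yields \cref{BndCr3} with a constant $\Cr{Cr3}$ independent of $\ve$.

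Next, for \cref{BndCr32}, I would go back one line in the computation above, before discarding the good term: integrating the identity $\frac{d}{dt}y=\mu y-\mu\io\ce^r\td x$ over $(0,T)$ gives
\begin{align*}
\mu\int_0^T\io\ce^r\td x\td t=y(0)-y(T)+\mu\int_0^T y(t)\td t\leq\|\coe\|_{L^1(\Omega)}+\mu T\Cr{Cr3},
\end{align*}
using $y(T)\geq0$ and $y\leq\Cr{Cr3}$. Dividing by $\mu$ and absorbing $\|\coe\|_{L^1(\Omega)}/\mu$ and possibly $\Cr{Cr3}/\mu$ into a single $\ve$-independent constant $\Cr{Cr4}$ (and noting $\int_0^T\io\ce^r=\|\ce\|_{L^r(\Omega\times(0,T))}^r$, so one may need to replace the linear bound by its $r$-th root, which is still of the claimed affine form up to renaming constants) gives \cref{BndCr32}.

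The only genuinely delicate point is bookkeeping: one must be careful that \cref{BndCr32} as stated bounds $\|\ce\|_{L^r}$ (not its $r$-th power) by an \emph{affine} function of $T$, so the estimate $\int_0^T\io\ce^r\leq\Cr{Cr4}^r+T\Cr{Cr3}$ (say) should be phrased so that taking $r$-th roots and using subadditivity of $s\mapsto s^{1/r}$ lands on the form $\Cr{Cr4}+T\Cr{Cr3}$ after renaming. This is routine; there is no analytical obstacle, since all three ingredients — mass conservation of the diffusion and adhesion terms, nonnegativity of $\ce$, and the sign of the logistic term — are already available from \cref{LemGl} and the model structure. No regularity or compactness input is needed at this stage.
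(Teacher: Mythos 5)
Your argument is correct and follows the paper's own proof essentially verbatim: integrate \cref{IPDEe} over $\Omega$, kill the diffusion and advection terms via the no-flux boundary condition \cref{bce}, exploit the logistic absorption to obtain a uniform-in-time $L^1$ bound via Gronwall, then integrate the mass identity over $(0,T)$ and reuse the $-\mu\io\ce^r$ term to extract \cref{BndCr32}. The only cosmetic difference is how the ODI for $y(t)=\io\ce\td{x}$ is closed --- you apply Jensen's inequality to get a genuine logistic inequality $y'\leq\mu y-\mu|\Omega|^{1-r}y^r$, whereas the paper uses the elementary pointwise bound $\mu(s-s^r)\leq C_1-C_2 s$ for $s\geq0$, $r>1$, to reduce to a linear ODI --- but both yield the same $\ve$-independent conclusion.
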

\begin{proof}
Integrating  \cref{IPDEe} over $\Omega$, by parts where necessary, using the boundary conditions, the assumption $r>1$, and the boundedness of $\Omega$, we obtain
\begin{align}
	\frac{d}{dt} \io\ce\td{x} = &\mu \io \ce-\ce^r \td{x}\label{estcel1}\\
	\leq& \Cl{C1r}-\Cl{C2r} { \mu} \io\ce\td{x}. \nonumber
\end{align}
Consequently, Gronwall's inequality implies \cref{BndCr3}. Integrating \cref{estcel1} over $(0,T)$ and using \cref{BndCr3}  immediately yields \cref{BndCr32}.
\end{proof}

 Next, we establish some uniform estimates for derivatives of $\ce$s. For the spatial gradient, the estimates hold away from the degeneracy set of $\D$.
\begin{Lemma} \label{lembounds}
For any $T>0$ and relatively open $B \subset\subset {{ \{\D>0\}}}$
there is a constant $\Cl{crp1}(B,T)$ s.t.
\begin{align}
	&\|\ce\|_{L^2(0,T;H^1({B}))}\leq \Cr{crp1}(B,T),\label{estLA_}\\
	&\|\ce \|_{L^{r+1}({B}\times(0,T))}\leq\Cr{crp1}(B,T)\label{est3r}
\end{align}
for all $\ve \in (0,{ \Cr{e2}}(B))$. %

 Furthermore, let $q$ be a number that satisfies
\begin{align}
 q\in \left(1,{\frac{d}{d-1}}\right). \label{Assq}
\end{align}
Then, for any $T>0$ there exists some constant $\Cl{Crt}(T)>0$ s.t.
\begin{align}
 \|\partial_t \ce\|_{L^1(0,T;W^{-2,q}(\Omega))}\leq \Cr{Crt}(T)\label{estT}
\end{align}
for $\ve\in(0,\Cr{e1})$.
\end{Lemma}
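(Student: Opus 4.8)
\textbf{Proof plan for \cref{lembounds}.}

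The plan is to derive the first two estimates from a weighted energy inequality localised near the open set $B$, following the computation already carried out in the proof of \cref{LemGl} but now tracking constants uniformly in $\ve$. First I would fix a cut-off function $\chi \in C_c^\infty(\R^d)$ with $\chi \equiv 1$ on a neighbourhood of $\overline B$ and $\supp\chi \subset\subset \{\D>0\}\cup\partial\Omega$, and choose $\Cr{e3}(B) \leq \Cr{e2}(\supp\chi)$ from \cref{lemapproxd} so that $\|\nabla\cdot\De\|_{L^\infty(\supp\chi)} \leq \Cr{divde}(\supp\chi)$ holds uniformly for $\ve \in (0,\Cr{e3}(B))$. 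Then I would test \cref{IPDEe} with $\chi^2$ (to get the $L^{r+1}$ bound via the reaction term's good sign) and separately multiply by $r\chi^2\ce^{r-1}$ and integrate over $\Omega\times(0,T)$. Using $\De \geq \ve>0$ on the diffusive region, the boundedness of $\nabla\cdot\De$ on $\supp\chi$, the uniform bound \cref{BndCr3} on $\|\ce\|_{L^1(\Omega)}$, the $L^\infty$-boundedness of $\As\ce$ (which follows from \cref{BndCr3} and \cref{DefA}, since $\|\As\ce\|_{L^\infty} \lesssim \||\nabla H|\|_{L^\infty(B_1)}\|\ce\|_{L^1(\Omega)}$), and Young's inequality to absorb the gradient terms, one obtains control of $\int_0^T\io \chi^2|\nabla\ce^{p/2}|^2$ for the relevant exponents $p$, which yields \cref{estLA_} (take $p=2$) and, combined with a Gagliardo--Nirenberg / Sobolev interpolation between $L^\infty(0,T;L^1)$ and $L^2(0,T;H^1)$, the space-time bound \cref{est3r}. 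The key point is that the $\ve$-dependence in the diffusion coefficient is harmless here because we only need $\De \geq \ve$ with the right sign, not a lower bound uniform in $\ve$: the degenerate constant $\ve^{-1}$ would appear, so instead one keeps the diffusion term on the left-hand side and uses only the \emph{sign} of $-p(p-1)\int \chi^2\ce^{p-2}\De\nabla\ce\cdot\nabla\ce \leq 0$, absorbing cross terms using the logistic dissipation $-\mu p\int\chi^2\ce^{p+r-1}$ rather than the diffusion. This is the standard trick and the constants then depend only on $B$, $T$, $\mu$, $r$, $d$, $\|\D\|_\infty$, $\Cr{divde}(\supp\chi)$, $\||\nabla H|\|_{L^\infty(B_1)}$ and $\Cr{Cr3}$.

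For the time-derivative estimate \cref{estT}, the plan is to read off $\partial_t\ce$ directly from the weak form of \cref{IPDEe}: for $\varphi \in W^{2,q'}_0(\Omega) = (W^{-2,q}(\Omega))'$ with $q' = q/(q-1) > d$ (so that by Sobolev embedding $W^{2,q'}(\Omega) \hookrightarrow C^1(\oa)$ and hence $\|\varphi\|_{C^1} \lesssim \|\varphi\|_{W^{2,q'}}$), one has
\begin{align*}
\langle \partial_t\ce(\cdot,t),\varphi\rangle = \io \ce\, \De : D^2\varphi \td x + \io \ce(\As\ce)\cdot\nabla\varphi \td x + \mu\io \ce(1-\ce^{r-1})\varphi \td x.
\end{align*}
Each term is bounded: the first by $\|\De\|_\infty \|\ce(\cdot,t)\|_{L^1}\|D^2\varphi\|_\infty \lesssim (\ve+\|\D\|_\infty)\Cr{Cr3}\|\varphi\|_{W^{2,q'}}$; the second by $\|\ce(\cdot,t)\|_{L^1}\|\As\ce(\cdot,t)\|_\infty\|\nabla\varphi\|_\infty \lesssim \Cr{Cr3}^2\|\varphi\|_{W^{2,q'}}$; the third by $\mu(\|\ce(\cdot,t)\|_{L^1} + \|\ce(\cdot,t)\|_{L^r}^r)\|\varphi\|_\infty$. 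Integrating in $t$ over $(0,T)$, the only term whose time integral needs a genuine bound is $\int_0^T\|\ce(\cdot,t)\|_{L^r}^r\td t = \|\ce\|_{L^r(\Omega\times(0,T))}^r$, which is controlled by \cref{BndCr32}; all other contributions are bounded by $T$ times a constant depending on $\Cr{Cr3}$ and $\|\D\|_\infty$. Taking the supremum over $\|\varphi\|_{W^{2,q'}_0}\leq 1$ gives \cref{estT} with a constant $\Cr{Crt}(T)$ that is uniform in $\ve\in(0,\Cr{e1})$ (using $\ve<\Cr{e1}$ to bound $\ve+\|\D\|_\infty$). The restriction \cref{Assq} on $q$ is exactly what makes $q'>d$ and hence $W^{2,q'}_0(\Omega)\hookrightarrow C^1(\oa)$, which is what the first two duality pairings require.

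The main obstacle is the first part: one must be careful that the localisation by $\chi$ produces commutator terms $\int \ce^{p-1}\De\nabla\ce\cdot\nabla(\chi^2)$ and $\int\ce^{p-1}(\nabla\cdot\De)\ce\,\chi^2$ and the adhesion contribution $\int\ce^{p-1}(\ce\As\ce)\cdot\nabla(\chi^2)$, and these involve $\nabla\ce$ with a weight that is not supported where $\chi\equiv 1$. Since $\supp\nabla\chi \subset\subset\{\D>0\}$ still, on that annular region $\De\geq\ve$ can again be used, but now the $\ve^{-1}$ would reappear unless one instead controls $\int_{\supp\nabla\chi}|\nabla\ce^{p/2}|^2$ by iterating the argument on a slightly larger cut-off region, or simply uses the already-established lower-order bound together with the logistic term. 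Concretely, I would use a nested family of cut-offs $\chi_0 \prec \chi_1 \prec \cdots$ and bootstrap: the gradient term on $\supp\nabla\chi_j$ is absorbed using the diffusion localised by $\chi_{j+1}$, which on that region is genuinely coercive with a constant independent of $\ve$ only after dividing by $\ve$ — so the cleaner route, which I would actually follow, is to keep $\ve\int\chi^2|\nabla\ce^{p/2}|^2$ on the left, prove the weighted estimate with a constant of the form $C(B,T)(1+\ve^{-1})$, and then observe that this is \emph{not} what is claimed: rereading the statement, \cref{estLA_}--\cref{est3r} are asserted for $\ve<\Cr{e3}(B)$ with $\Cr{crp1}(B,T)$ fixed, so in fact the intended argument must avoid $\ve^{-1}$ entirely, which forces using the logistic dissipation $-\mu p\int\chi^2\ce^{p+r-1}$ to absorb \emph{all} the first-order terms after Young's inequality (possible precisely because $p+r-1 > p$ gives a super-quadratic sink), leaving the diffusion term only to be discarded by its sign. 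Getting the exponent bookkeeping in this absorption right — in particular choosing $p$ in the range where Gagliardo--Nirenberg closes the $L^{r+1}$ estimate — is the delicate step.
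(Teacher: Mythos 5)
The time-derivative estimate \eqref{estT} is handled correctly and essentially as in the paper: read $\partial_t\ce$ off the weak form, pair against $\varphi\in W_0^{2,q'}(\Omega)$, use $q'>d$ so that $W^{2,q'}\hookrightarrow C^1(\oa)$, and control the terms via \eqref{BndCr3} and \eqref{BndCr32}.

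Your treatment of \eqref{estLA_} and \eqref{est3r} has a genuine gap, located precisely in your ``main obstacle'' paragraph where you talk yourself out of the right argument. You correctly observe that invoking only $\De\geq\ve$ would produce a constant of size $\ve^{-1}$, which cannot be what is intended. But your proposed fix --- ``discard the diffusion term by its sign and absorb all first-order terms into the logistic dissipation'' --- fails on two counts. First, once you drop the diffusion term you have no mechanism whatsoever to produce the gradient bound $\|\nabla\ce\|_{L^2(B\times(0,T))}$ that \eqref{estLA_} asserts; that bound has to come \emph{out of} the diffusion term, so the diffusion term cannot be thrown away. Second, the cross terms arising from the weight $\chi^2$ (and from $\nabla\cdot\De$ and $\As\ce$) have the form $\int\chi^2\ce^{p-1}\,\nabla\ce\cdot(\text{bounded})$: after Young's inequality a piece $\lambda\int\chi^2\ce^{p-2}|\nabla\ce|^2$ survives, and no zeroth-order quantity such as $\int\chi^2\ce^{p+r-1}$ can absorb a term carrying $|\nabla\ce|^2$.

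The observation you are missing, and which the paper uses, is elementary but crucial: $B$ is compactly contained in the \emph{relatively open} set $\{\D>0\}\cup\partial\Omega$, so there is a slightly larger open set $\hat B\supset\supset B$ still compactly contained in $\{\D\ngtr 0\}^c$, and by continuity and compactness $\D\geq\delta>0$ on $\overline{\hat B}\cap\oa$ with $\delta$ depending only on $B$, \emph{not} on $\ve$. Since $\De\to\D$ uniformly by \eqref{detod}, one has $\De\geq\delta/2$ on $\hat B$ for all $\ve$ below some $\Cr{e3}(B)$. This uniform coercivity is exactly what absorbs the $\lambda\int\chi^2\ce^{p-2}|\nabla\ce|^2$ remnants of Young's inequality with a constant independent of $\ve$, and what is left over on the left-hand side, $\frac{\delta}{4}\int\chi^2|\nabla\ce|^2$, gives \eqref{estLA_}. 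Incidentally, the paper does not test separately with $\chi^2$ and with $r\chi^2\ce^{r-1}$ as you propose: it simply tests with $\ce\varphi^2$ (your $p=2$), and the term $\mu p\int\chi^2\ce^{p+r-1}=2\mu\int\varphi^2\ce^{r+1}$ that then appears on the left already yields \eqref{est3r}, so no Gagliardo--Nirenberg interpolation is needed.
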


\begin{proof}
 Let $T \in (0,\infty)$ 
and consider a relatively open set $$B \subset\subset { \{\D>0\}}.$$  Then, there exists a sufficiently small number $\alpha>0$
such that
$$\hat{B}:=O_{\alpha}(B)  \subset\subset {\R^d \setminus \{D\ngtr 0\}}.$$
Due to the uniform continuity of $\D$ and our choice of $B$,
\begin{align} \label{estdelta}
 \delta:=\underset{x\in \hat B}{\inf}\,\underset{z\in S_1(0)}{\min}z^T\D(x)z>0.
\end{align}
In what follows we leave out the dependence on $\delta$ and $\hat{B}$ in the constants as they are determined by $B$.
Let $\varphi \in C_{0}^{\infty}(\hat B)$ be s.t.
\begin{align}\label{deffi}
	\varphi  \begin{cases}
		= 1 & \text{ on } B,\\
		\in [0,1] & \text{ on } \hat{B}\setminus B,\\
		= 0 & \text{ on } \R^d\setminus \hat{B}.
	\end{cases}
\end{align}
Let $\ve\in(0,\Cr{e2}(B))$, where $\Cr{e2}(B)$ is from \cref{lemapproxd}. We multiply the first equation in \cref{modelapprox} by $\ce\varphi^2$, integrate over $\Omega$ by parts where necessary, using the no-flux boundary condition{,} and obtain
\begin{align}
	\frac{d}{dt} \io \ce^2\varphi^2 \td{x}
	=& - \io \left(\De \nabla \ce +\nabla \cdot \De \ce - \ce \As \ce \right) \cdot \left(\varphi^2\nabla \ce + 2 \ce \varphi \nabla \varphi \right) \td{x} + \mu \io \ce^2\varphi^2 (1-\ce^{r{ -}1}) \td{x}\nonumber\\
	\leq& - \io (\nabla \ce)^T \De \varphi^2\nabla \ce \td{x}  + 2 \left| \io \left(\varphi \nabla \ce\right)^T \De \left(\ce \nabla \varphi\right) \td{x} \right| \nonumber\\
	& + \left|\io \left(\nabla \cdot \De - \As \ce\right) \cdot \left(\ce\varphi^2\nabla \ce + 2 \ce^2 \varphi \nabla \varphi\right) \td{x} \right|  + \mu \io \ce^2\varphi^2 \td{x} - \mu \io \ce^{r+1}\varphi^2 \td{x}. \label{est0}
\end{align}
The matrix $\De$ is symmetric and positive-definite. Hence, it defines a scalar-product on $\R^d$, and with the Cauchy-Schwartz and Young's inequalities and \cref{estDe} we obtain
\begin{align}
	2 \left| \io \left(\varphi \nabla \ce\right)^T \De \left(\ce \nabla \varphi\right) \td{x} \right|
	\leq& 2\sqrt{\io \left(\varphi \nabla \ce\right)^T \De \varphi \nabla \ce \td{x} }\sqrt{\io \left(\ce \nabla \varphi\right)^T \De \ce \nabla \varphi \td{x} } \nonumber\\
	\leq& \frac{1}{4} \io \left(  \nabla \ce\right)^T \De \varphi^2\nabla \ce \td{x} + 4 \|\nabla \varphi\|^2_{L^{\infty}(\R^d)} \Cl{bde} \io {\ce^2} \td{x}, \label{est1}
\end{align}
where $\Cr{bde} := { \Cr{e1}} + \|\D\|_{(L^{\infty}(\Omega))^{d\times d}}$.
Moreover, combining \cref{bounddivepsb}, \cref{lemboundlelr}, and H\"older's and Young's inequalities, we can estimate
\begin{align}
	&\left|\io \left(\nabla \cdot \De - \As \ce\right) \cdot \left(\ce\varphi^2\nabla \ce + 2 \ce^2 \varphi \nabla \varphi\right) \td{x} \right|\nonumber\\
	\leq& \left( \| { |}\nabla\cdot \De { |}\|_{L^{\infty}(\hat{B})} + \|{|\nabla H|}\|_{L^{\infty}(B_{1})} \|\ce\|_{L^1(\Omega)}\right) \io \left|\ce\varphi^2\nabla \ce\right| + 2 \left|\ce^2 \varphi \nabla \varphi\right| \td{x}\nonumber\\
	\leq& \frac{\delta}{2} \io |\varphi\nabla \ce|^2 \td{x} + \Cl{ca2}({ B})\io \ce^2 \td{x}, \label{est2}
\end{align}
Combining \cref{est1,est2,est0} and rearranging the terms leads due to \cref{estdelta} and \cref{deffi} to
\begin{align} \label{estce2v}
	\frac{d}{dt} \io \ce^2\varphi^2 \td{x} + \frac{\delta}{4} \io |\nabla \ce |^2 \varphi^2\td{x} + \mu \io \ce^{r+1}\varphi^2 \td{x} \leq& \Cl{ca3}({ B})\io {\ce^2}\td{x}.
\end{align} 
We integrate \cref{estce2v} over $(0,T)$ and obtain {using $r\geq2$, \cref{lemboundlelr}, \cref{deffi}, the uniform boundedness of the initial values, and H\"older's inequality} that 
\begin{align}\label{boundsalle}
	\|\ce\varphi\|_{L^{\infty}(0,T;L^2(\Omega))}^2 + \frac{\delta}{4} \|\varphi\nabla \ce\|_{L^2(\Omega\times(0,T))}^2 + { \mu} \|\ce\|_{L^{r+1}(B\times(0,T))}^{r+1} \leq \C(B,T),
\end{align}
which yields \cref{estLA_,est3r}.

 Finally, from
\begin{align*}
	\io \partial_t\ce \psi dx
	=& \io \ce \De: D^2\psi dx + \io \ce (\As \ce ) \cdot \nabla \psi dx  + \mu \io \ce (1-\ce^{r-1})\psi dx,
\end{align*} 	
which holds, in particular, for $\psi \in W_0^{2,\frac{q}{q-1}}(\Omega) {\subset C_0^1(\overline{\Omega})}$, 
we obtain using \cref{BndCr32,Assq,Assdr} and the Sobolev embedding theorem that
\begin{align}
 \|\partial_t \ce\|_{W^{-2,q}(\Omega)} \leq \C&\left(d^2 \|\De\|_{(L^{\infty}(\Omega))^{d\times d}}\|\ce\|_{L^r(\Omega)} + \|\nabla H\|_{(L^{\infty}({B_1}))^{d}} \|\ce\|_{L^1(\Omega)}^{ 2}\right.\nonumber\\
 &\ \left.+ \mu \|\ce\|_{L^1(\Omega)}+\mu \|\ce\|_{L^r(\Omega)}^r\right),
\end{align}
yielding \cref{estT} upon integration over $(0,T)$.
\end{proof}

 With the obtained estimates at hand we can now proceed to establishing convergence.
\begin{Lemma} \label{lemconvl1}
There exist $c\in L^{\infty}(0,\infty;L^1(\Omega))$ and a sequence $(\ve_k)\subset(0,\Cr{e1})$, $\ve_k\to0$, s.t.
\begin{align}
	c_{\ve_k} \underset{k \rightarrow \infty}{\rightarrow} c\qquad &\text{in } {L^1_{loc}(\oa \times [0,\infty))},\label{convL1}\\ &\text{a.e. in }\Omega\times (0,\infty).\label{convae}
\end{align}
\end{Lemma}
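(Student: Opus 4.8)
\textbf{Proof plan for \cref{lemconvl1}.}
The plan is to apply an Aubin--Lions--Simon type compactness argument, but with a twist: the spatial estimate \cref{estLA_} only holds locally, away from the degeneracy set, so I cannot directly get compactness in $L^1(\Omega\times(0,T))$ from a single Sobolev embedding. First I would fix an exhaustion $B_1\subset\subset B_2\subset\subset\cdots$ of the relatively open set $\{\D>0\}\cup\partial\Omega$ (intersected with $\oa$) by relatively open sets with $\bigcup_n B_n\supset\oa\setminus\{\D\ngtr 0\}$, so that $\oa\setminus\bigcup_n B_n=\{\D\ngtr 0\}$ has (by \cref{bedd}, in particular) Lebesgue measure zero. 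On each cylinder $B_n\times(0,T)$ I have, from \cref{lembounds}, a uniform bound on $\ce$ in $L^2(0,T;H^1(B_n))$ and, from \cref{estT}, a uniform bound on $\partial_t\ce$ in $L^1(0,T;W^{-2,q}(\Omega))$; since $H^1(B_n)\hookrightarrow\hookrightarrow L^1(B_n)\hookrightarrow W^{-2,q}(B_n)$ (the last embedding is continuous because $W_0^{2,q/(q-1)}(B_n)\hookrightarrow C(\overline{B_n})$ by \cref{Assq}), the Aubin--Lions--Simon lemma gives a subsequence converging strongly in $L^1(B_n\times(0,T))$, and after a diagonal extraction over $n$ and over $T\in\N$ I obtain a single sequence $(\ve_k)$ and a limit $c$ with $c_{\ve_k}\to c$ in $L^1(B_n\times(0,T))$ for every $n$ and every $T$, hence a.e. on $\bigcup_n B_n\times(0,\infty)$, which is a.e. on $\Omega\times(0,\infty)$ since the complement is null.

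Next I would upgrade this to convergence in $L^1_{loc}(\oa\times[0,\infty))$, i.e. in $L^1(\Omega\times(0,T))$ for every $T>0$. The a.e. convergence is already in hand; what is needed is uniform integrability of the family $(c_{\ve_k})$ on $\Omega\times(0,T)$ so that Vitali's convergence theorem applies. This is exactly where the global bounds \cref{BndCr3,BndCr32} come in: \cref{BndCr32} gives a uniform bound on $\|\ce\|_{L^r(\Omega\times(0,T))}$ with $r>1$, and a uniformly $L^r$-bounded family with $r>1$ is uniformly integrable (de la Vall\'ee-Poussin, or directly by H\"older on sets of small measure). Together with the finite measure of $\Omega\times(0,T)$ and the a.e. convergence, Vitali then yields $c_{\ve_k}\to c$ in $L^1(\Omega\times(0,T))$, which is \cref{convL1}. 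Finally, the limit $c$ inherits the bound $c\in L^{\infty}(0,\infty;L^1(\Omega))$ from \cref{BndCr3}: along the subsequence, for a.e.\ fixed $t$ one has $\ce(\cdot,t)\to c(\cdot,t)$ in $L^1(\Omega)$ (after possibly passing to a further subsequence depending on $t$, or more cleanly by testing the time-integrated formulation), and Fatou's lemma passes the uniform bound to $c$; alternatively, weak-$\ast$ lower semicontinuity of the $L^\infty_tL^1_x$ norm does the job.

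The main obstacle, and the only genuinely non-routine point, is the localisation: because the $H^1$-in-space estimate degenerates near $\{\D\ngtr 0\}$, one cannot invoke Aubin--Lions on the whole domain at once, and one must argue that the degeneracy set is negligible (measure zero) so that a.e.\ convergence on the exhausting subdomains suffices — here condition \cref{bedd} is what guarantees $|\{\D\ngtr 0\}|=0$, since a set of upper box dimension strictly less than $d$ is Lebesgue-null. The compatibility of the two different function-space scales (local $H^1$ versus global $W^{-2,q}$ for $\partial_t$) in the Aubin--Lions step requires a little care, but is standard once one observes that $W^{-2,q}(\Omega)$ restricted to $B_n$ controls $W^{-2,q}(B_n)$. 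Everything else — the diagonal extraction, the passage from a.e.\ plus $L^r$-boundedness to $L^1_{loc}$ convergence via Vitali, and the Fatou argument for the limit bound — is routine.
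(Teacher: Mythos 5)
Your proposal follows essentially the same route as the paper: local Aubin--Lions--Simon compactness on cylinders $B\times(0,T)$ with $B\subset\subset\{\D>0\}\cup\partial\Omega$ (using \cref{estLA_} and \cref{estT}), a diagonal extraction over an exhaustion of $\{\D>0\}\cup\partial\Omega$ and over $T$, the observation that \cref{bedd} forces $|\{\D\ngtr 0\}|=0$ so a.e.\ convergence on $\bigcup B_n\times(0,\infty)$ is a.e.\ on $\Omega\times(0,\infty)$, and then de la Vall\'ee-Poussin plus Vitali via \cref{BndCr32} to upgrade to $L^1_{loc}$. The only cosmetic difference is that the paper uses the chain $H^1(B)\subset\subset L^2(B)\subset W^{-2,q}(\Omega)$ (keeping $W^{-2,q}(\Omega)$ as the ambient dual space, which sidesteps the restriction issue you flagged), whereas you use $L^1(B_n)$ and $W^{-2,q}(B_n)$; you also add a short Fatou/weak-$\ast$ argument for $c\in L^\infty(0,\infty;L^1(\Omega))$, which the paper leaves implicit.
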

\begin{proof}
Let $T>0$ and consider a relatively open set $B \subset\subset { \{\D>0\}}$. Thanks to estimates \cref{estLA_,estT}, the dense embeddings
\begin{align*}
 H^1(B) \subset\subset L^2(B)\subset W^{-2,q}(\Omega),
\end{align*}
where the latter holds due to \cref{Assq}, 
and the Lions-Aubin lemma (see e.g Corollary 4 in \cite{Simon}){,} the family $(\ce)$ is precompact in $L^2(\Omega\times(0,T))$.
Consequently, every sequence $(c_{\ve_j})_{j \in \N}$ has a subsequence that converges in $L^2(\Omega\times(0,T))$, and it can be chosen such that it converges a.e. in $B\times(0,T)$.

 Observe that since ${ \{\D>0\}}$  is relatively open in the compact set $\oa$, there exists a sequence $(B_i)$ of relatively open sets such that $B_i\subset\subset{ \{\D>0\}}$ and ${ \{\D>0\}}=\bigcup_{i=1}^{\infty} B_i$. 
In view of this, we have 
\begin{align*}
 { \{\D>0\}}\times(0,\infty)=
 \bigcup_{i=1}^{\infty} B_i\times(0,i).
\end{align*}
Together with a diagonal argument this description in the form of a countable union allows to conclude from the above that there {exist} some $c\in L^{2}_{loc}({(\{\D > 0\}  \times [0,\infty)})$ and a sequence $(\ve_k)\subset(0,\Cr{e1})$ that converges to zero and is such that 
\begin{align}
	c_{\ve_k} \underset{k \rightarrow \infty}{\rightarrow} c\qquad \text{ a.e. in } { \{\D>0\}}\times(0,\infty).\label{convaeT_}
\end{align}
Since by \cref{bedd} the degeneracy set $\{\D \ngtr 0\}$ has the $d$-dimensional Lebesgue measure zero, \cref{convaeT_} is equivalent to
\begin{align}
	c_{\ve_k} \underset{k \rightarrow \infty}{\rightarrow} c\qquad \text{ a.e. in }\Omega\times(0,\infty).\label{convaeT}
\end{align}
Furthermore, due to {\cref{BndCr32}} and $r>1$, the sequence $(c_{\ve_k})_{k\in\N}$ is uniformly integrable on $\Omega\times(0,T)$ for all $T>0$ by the de la Vall\'ee-Poussin theorem (Theorem 22 in \cite{DellacherieMe}). Now  
$$c_{\ve_k} \underset{k\rightarrow\infty}{\rightarrow} c \text{ in } L^1(\Omega\times(0,T))\qquad \text{for all }T>0$$  
and $c \in L^1_{loc}(\oa \times [0,\infty))$ follows with \cref{convaeT} and Vitali's lemma (Theorem 21 in \cite{DellacherieMe}).
\end{proof}

 In preparation for the proof of existence of a very weak solution to \cref{model} we still need one more lemma that allows us to handle the nonlinear part of the reaction term in \cref{modelapprox}.

\begin{Lemma} \label{lemconvr}
Let $(\ve_k)_{k\in\N}$ be as in \cref{lemconvl1}. Then, for all $T\in(0,\infty)$ it holds that {$c \in L^r_{loc}(\oa \times [0,\infty))$} and
\begin{align}
	\cek^r \underset{k \rightarrow \infty}{{\to}} c^r \qquad\text{in }L^1(\Omega\times (0,T)).\label{Conv4}
\end{align}
\end{Lemma}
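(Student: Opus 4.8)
The plan is to upgrade the a.e.\ convergence from \cref{lemconvl1} to weak $L^1$ convergence of the powers $c_{\varepsilon_k}^r$ by establishing \emph{uniform integrability} of the sequence $(c_{\varepsilon_k}^r)_k$ on $\Omega\times(0,T)$, and then invoke the Dunford--Pettis theorem together with the identification of the a.e.\ limit. The natural source of uniform integrability is a uniform bound on $(c_{\varepsilon_k}^{r+1})$ in $L^1_{loc}$, since $r+1 > r$ would give equi-integrability via de la Vall\'ee-Poussin. However, estimate \cref{est3r} only provides such a bound on sets $B\subset\subset\{\D>0\}\cup\partial\Omega$, \emph{not} near the degeneracy set $\{\D\ngtr0\}$. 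This is precisely where \cref{bedd} and \cref{LemFD} from \cref{appB} must enter: the low fractal dimension of the degeneracy set, balanced against the superlinear absorption $-\mu c^r$ in the reaction term, is what controls the mass of $c_{\varepsilon_k}^r$ on a shrinking neighbourhood of $\{\D\ngtr0\}$.

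Concretely, first I would fix $T>0$ and a small $\delta>0$ and split $\Omega = \{\D\geq\delta\}\cup\{\D<\delta\}$ (using the notation $\Dd$, $\Ddk$). On the "good" part $\{\D\geq\delta\}$, which is compactly contained in $\{\D>0\}\cup\partial\Omega$ once we also stay away from $\partial\Omega$ using $a>0$, estimate \cref{est3r} gives a uniform $L^{r+1}$ bound, hence equi-integrability of $(c_{\varepsilon_k}^r)_k$ there by de la Vall\'ee-Poussin. On the "bad" part, a neighbourhood of the degeneracy set whose Lebesgue measure is small (its measure is controlled via the box-counting sets $Z_\delta$, whose cardinality grows slower than $\delta^{-(d-2r/(r-1))}$ by \cref{bedd}), I would use H\"older's inequality combined with the $L^\infty_t L^1_x$ mass bound \cref{BndCr3} and, crucially, \cref{LemFD} to trade the small volume against the exponent, showing that $\int_{\{\D<\delta\}\times(0,T)} c_{\varepsilon_k}^r \to 0$ uniformly in $k$ as $\delta\to0$. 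Combining the two parts yields equi-integrability of $(c_{\varepsilon_k}^r)_k$ on all of $\Omega\times(0,T)$, so the sequence is relatively weakly compact in $L^1(\Omega\times(0,T))$ by Dunford--Pettis.

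Having this, weak $L^1$ compactness plus the a.e.\ convergence $c_{\varepsilon_k}\to c$ from \cref{convae} forces (along a further subsequence, but uniqueness of the weak limit makes extraction unnecessary) the weak limit of $c_{\varepsilon_k}^r$ to coincide with $c^r$: indeed, $c_{\varepsilon_k}^r \to c^r$ a.e., and a uniformly integrable sequence converging a.e.\ converges weakly in $L^1$ to the same limit (this is again Vitali/Dunford--Pettis). In particular $c^r\in L^1(\Omega\times(0,T))$ for every $T$, i.e.\ $c\in L^r_{loc}(\oa\times[0,\infty))$, which is the final assertion.

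The main obstacle is the estimate on the degenerate region: one must extract, from the approximate-problem energy inequalities and the mass bound, enough integrability of $c_{\varepsilon_k}$ on thin neighbourhoods of $\{\D\ngtr0\}$ to beat the small but positive Lebesgue measure of those neighbourhoods in the $L^r$ norm — and the only leverage available is the fractal-dimension condition \cref{bedd} through \cref{LemFD}. Getting the bookkeeping of the exponents right (the threshold $d-\frac{2r}{r-1}$ is exactly what makes the interpolation between the $L^1$ mass bound and the local $L^{r+1}$ bound close) is the delicate point; everything else is a routine application of de la Vall\'ee-Poussin, Dunford--Pettis, and Vitali.
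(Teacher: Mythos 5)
The first half of your plan (Fatou for $c\in L^r$, strong $L^1$ convergence on compact subsets of $\{\D>0\}\cup\partial\Omega$ via de la Vall\'ee-Poussin and Vitali, and weak identification of the limit once equi-integrability is known) matches the paper in spirit. The gap is in the concrete mechanism you propose for the degenerate region. You write that on $\{\D<\delta\}\times(0,T)$ you would control $\int c_{\varepsilon_k}^r$ using ``H\"older's inequality combined with the $L^\infty_t L^1_x$ mass bound \cref{BndCr3}'' and the small Lebesgue measure of the bad set. This cannot close: an $L^1$ bound on $c_{\varepsilon_k}$ gives no control whatsoever on $\int_A c_{\varepsilon_k}^r$ for small sets $A$ --- take $c=M\mathbf{1}_A$ with $|A|=1/M$, which has unit mass but $\int_A c^r = M^{r-1}\to\infty$. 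No amount of H\"older between an $L^1$ bound and the measure of $A$ rescues this, because H\"older would require a uniform $L^{rp}$ bound for some $p>1$ on the bad set, which is precisely what is unavailable there.

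The actual source of control in the paper is not an interpolation or integrability gain; it is the PDE itself. One multiplies \cref{IPDEe} by a smooth cutoff $\vd$ supported in a $\delta$-neighbourhood of $\{\D\ngtr0\}$ (from \cref{LemFD}), integrates by parts so that derivatives land on $\vd$, and keeps the absorption term $\mu\io c_{\varepsilon_k}^r\vd$ on the left-hand side. The right-hand side is then bounded using the $\delta^{-1}$, $\delta^{-2}$ derivative scaling of $\vd$, H\"older with the $L^r$ bound \cref{BndCr32} on $\supp(\vd)$, and Gronwall; the crucial quantity $\delta^{-2}|\supp(\vd)|^{1-1/r}$ tends to zero by \cref{keylim}, which is where \cref{bedd} enters. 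Your final paragraph alludes to ``approximate-problem energy inequalities'' but does not identify this testing-against-$\vd$ step or the role of the $-\mu c^r$ term; as a result the concrete plan you gave would not produce the needed smallness on the degenerate region. A secondary, more cosmetic issue is that the hard split $\{\D\geq\delta\}\cup\{\D<\delta\}$ is less convenient than the cutoff $\vd$: the support measure bound from \cref{LemFD} controls a metric $\delta$-neighbourhood of $\{\D\ngtr0\}$, not the sublevel set $\{\D<\delta\}$, and the paper in fact obtains a double limit (first $k\to\infty$, then $\delta\to0$) rather than the uniform-in-$k$ smallness your Dunford--Pettis route would strictly require.
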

\begin{proof}
Let $T\in (0,\infty)$ and consider the sequence $(\cek)_{k\in\N}$ from \cref{lemconvl1}. 
Fatou's lemma together with estimate \cref{BndCr32} from \cref{lemboundlelr} imply that
\begin{align*}
	\int_0^T \io c^r \, \td{x} \td{t} \leq \underset{k\rightarrow\infty}{\lim\inf} \int_0^T \io \cek^r \, \td{x} \td{t} \leq \Cr{coeb},
\end{align*}
and so $c\in L^r(\Omega\times (0,T))$.\\  
 Due to \cref{LemFD} and assumption \cref{bedd},  there exists a family of functions $(\vd)_{\delta \in (0,1)} \subset C_c^{\infty}(\R^d;[0,1])$ satisfying \cref{AssPhi} for $K := \{\D \ngtr 0\}$. We adopt the splitting 
\begin{align}
	\cek^r  =  \cek^r (1-\vd)+ \cek^r \vd\label{split}
\end{align}
and next study the convergence of each of the terms separately.

\begin{proofpart}[Convergence of the first term] 
Set 
$$B_{\delta} := {\{\D >0\}} \setminus \overline{O_{{\delta\sqrt{d}}}(\{\D \ngtr 0\})}.$$
 Obviously, $B_{ \delta}$ is relatively open and satisfies $B_{\delta}\subset\subset {\{\D >0\}}$. Arguing similar to the proof of \cref{lemconvl1}, we conclude with \cref{est3r} in \cref{lembounds} and \cref{convae} in \cref{lemconvl1} and the de la Vall\'ee-Poussin theorem (Theorem 22 in \cite{DellacherieMe}) and Vitali's lemma (Theorem 21 in \cite{DellacherieMe})  that
\begin{align} \label{convad}
	\cek^r  \underset{k \rightarrow \infty}{\rightarrow}  c^r \qquad\text{in } L^1(B_{\delta}\times (0,T)).
\end{align}
Since $\vd = 1$ outside of $B_{\delta}$ due to \cref{vd2}, \cref{convad} yields
\begin{align}
 \cek^r(1-\vd) \underset{k\to\infty}{\to}c^r(1-\vd)\qquad\text{in }L^1(\Omega\times(0,T))\text{ for all }\delta\in(0,1).\label{convInt1}
\end{align}
 Furthermore, the integrability of $c^r$  together with  \cref{ApconvPhi} and the uniform boundedness of $(\vd)$ allow to conclude using the dominated convergence theorem that 
\begin{align}
 c^r(1-\vd)\underset{\delta\to0}{{\to}}c^r\qquad\text{in }L^1(\Omega\times(0,T)).\label{convInt2}
\end{align}
\end{proofpart}
\begin{proofpart}[Convergence of the second term and conclusion]
{Due to \cref{distdod,vd3}, we have 
$$\text{supp}(\varphi_{\delta}) \subset \subset O_{5\delta \sqrt{d}}(\{\D \ngtr 0\}) \subset \subset \Omega$$
for $\delta\in(0,1)$ sufficiently small. For such $\delta$, 
}we multiply \cref{IPDEe} by $\vd$ and integrate over $\Omega$, once/twice by parts where necessary, so as to shift all spatial derivatives to $\vd$. Using Hoelder's inequality, \cref{estDe}, and \cref{lemboundlelr,LemFD}, we estimate as follows:
\begin{align*}
	&\frac{d}{dt} \io \cen \vd \td{x} + \mu \io \cen^r \vd \td{x}\\
	\leq& \mu \io \cen \vd \td{x} + \io |\cen\Dek:D^2\vd| \td{x} + \io |\cen \As \cen \cdot \nabla \vd| \td{x}\\
	\leq& \mu \io \cen \vd \td{x} + d^2 \|\Dek\|_{{(L^{\infty}(\Omega))^{d\times d}}} \|D^2\vd\|_{{(L^{\infty}(\Omega))^{d\times d}}}\int_{\{D^2 \vd \neq 0\}}  \cen \td{x}\\ 
	&+ \|\cen\|_{L^{\infty}(0,\infty;L^1(\Omega))} \|\nabla H\|_{{(L^{\infty}(B_1))^d}} \|\nabla \vd\|_{{(L^{\infty}(\Omega))^{d}}} \int_{\{\nabla \vd \neq 0\}}  \cen \td{x}\\
	\leq& \mu \io \cen \vd  \td{x} + \left(\|D^2\vd\|_{{(L^{\infty}(\Omega))^{d\times d}}} + \|\nabla \vd\|_{{(L^{\infty}(\Omega))^{d}}} \right)\C\int_{\text{supp}\vd }  \cen \td{x}\\
	\leq&  \mu \io \cen\vd  \td{x} + \delta^{-2}|\text{supp}(\vd)|^{1-\frac{1}{r}}\Cl{cxi1}\|\cen\|_{L^r(\Omega)}.
\end{align*}
We conclude from Gronwall's and H\"older's inequalities and    \cref{lemboundlelr} that 
\begin{align}
	&\io \cen(\cdot,{ T}) \vd \td{x} + \mu \int_0^{{ T}} \io \cen^r \vd  \td{x}
	\td{t}\nonumber\\
	\leq& e^{\mu T}\left(\io c_{0\ve_k}\vd \td{x} + {{\delta^{-2}|\text{supp}(\vd)|^{1-\frac{1}{r}}}}\Cr{cxi1}\int_0^T\|\cen\|_{L^r(\Omega)}\td{t} \right) \nonumber\\
	\leq& e^{\mu T}\left(\io c_{0\ve_k}\vd \td{x} + {{\delta^{-2}|\text{supp}(\vd)|^{1-\frac{1}{r}}}}T^{\frac{r-1}{r}} \Cr{cxi1}\|\cen\|_{L^r(\Omega\times(0,T))} \right) \nonumber\\
	\leq& e^{\mu T}\left(\io c_{0\ve_k}\vd \td{x} + {{\delta^{-2}|\text{supp}(\vd)|^{1-\frac{1}{r}}}}\Cl{cxi1_}(T)\right){\quad\text{for }t\in(0,T)}. \label{estcr}
\end{align}
Combining \cref{estcr,convc0}, we find that
\begin{align}
  \underset{k\to\infty}{\lim\sup}\int_0^t \io \cen^r \vd  \td{x}\td{t}
  \leq  \mu^{-1}e^{\mu T}\left(\io c_0\vd \td{x} + {{\delta^{-2}|\text{supp}(\vd)|^{1-\frac{1}{r}}}}\Cr{cxi1_}(T)\right)\quad\text{for }t\in(0,T),\ \delta\in(0,1).\label{convInt3}
\end{align}
Due to the integrability of $c_0$, \cref{ApconvPhi}, the uniform boundedness of $(\vd)$, and the dominated convergence theorem we have 
\begin{align}\label{convcod}
\lim_{\delta\to0}\io c_0 \vd \td{x}= 0.
\end{align}
Together, \cref{convcod}, \cref{keylim}, and \cref{convInt3} yield
\begin{align}
 \cen^r \vd \underset{k\to\infty}{\to}\underset{\delta\to0}{\to}0\qquad\text{in }L^1(\Omega\times(0,T)).\label{convrest}
\end{align}
Finally, combining \cref{convrest,convInt2,convInt1,split}, we arrive at \cref{Conv4}.
\end{proofpart}
\end{proof}

\begin{Remark}
The assumptions $d \geq 3$ and $r > \frac{d}{d-2}$ from \cref{Assdr} are only required in the proof of  \cref{lemconvr}. Together with \cref{bedd}, they ensure the existence of the $\vd$s due to \cref{LemFD}.
\end{Remark}

 Finally, we can prove our main result on the existence of a very weak solution to the original IBVP \cref{model}.
\begin{proof}[Proof of \cref{veryweaksol}]
Consider the sequence $(\cek)_{k\in\N}$ from \cref{lemconvl1,lemconvr}. Let $\eta \in C_{c}^{2,1}(\oa \times [0,\infty))$ with $\nabla\eta \cdot (\D \nu) = 0$ on $\partial \Omega \times (0,\infty)$. Then, there is $T\in (0,\infty)$ s.t. $\eta \equiv 0$ for $t \geq T$. We multiply \cref{IPDEe} by $\eta$ and integrate over $\Omega\times (0,\infty)$, once or twice by parts where necessary, using the boundary condition on $\eta$ as well as \cref{bce}, and
for all $k\in\N$ conclude that
\begin{align}
	&- \int_0^{\infty} \io \cek  \partial_t \eta \td{x} \td{t} - \io c_{0\ve_k} \eta(\cdot,0)\td{x}\nonumber\\ 
	=&  \int_0^{\infty} \io {\cek \D_{\ve_k} :} D^2\eta \td{x} \td{t} - \int_0^{\infty} \int_{\partial \Omega} \cek \nabla \eta \cdot \left(\D_{\ve_k} \nu\right) \td{\sigma}(x)\td{t}\nonumber \\
	&+ \int_0^{\infty} \io \cek(\As \cek) \cdot \nabla \eta \td{x} \td{t} + \mu \int_0^{\infty} \io \cek (1-\cek^{r-1}) \eta \td{x} \td{t}.\label{vweake}
\end{align}
We first address convergence of $(\cen)$ on $\partial\Omega\times(0,T)$. Observe that since $O_{a/2}(\partial\Omega)\cap\Omega$ is open and precompact in ${ \{\D>0\}}$, we can make use of the uniform boundedness of $(\cek)_{k\in\N}$ in $L^2(0,T;H^1(O_{a/2}(\partial\Omega)\cap\Omega))$ due to \cref{estLA_} and convergence \cref{convL1}, yielding 
\begin{align}
 \cek\underset{k\to\infty}{\rightharpoonup}c\qquad \text{in }L^2(0,T;H^1(O_{a/2}(\partial\Omega)\cap\Omega))\text{ for all }T>0.\label{convweak}
\end{align}
Using the continuity of the trace operator, we conclude with \cref{convweak} that
\begin{align}
 \cek\underset{k\to\infty}{\rightharpoonup}c\qquad \text{in }L^2(0,T;L^2(\partial\Omega))\text{ for all }T>0.\label{convweaktr}
\end{align}
Now convergences \cref{convc0,detod,convL1,Conv4,convweaktr} and  continuity of $\As:L^1(\Omega)\to (L^{\infty}(\Omega))^{ d}$ together with compensated compactness allow to pass to the limit in each term in \cref{vweake}, yielding \cref{WeakF}.
\end{proof}

\section{Smooth very weak solutions are classical} \label{appendixvws}
In this final Section we provide a justification for the very weak formulation \cref{WeakF}.
We show that as in the case of Neumann boundary conditions for elliptic equations (see e.g. Theorem 2.2.2.5 in \cite{Grisvard}) it holds for smooth $\D$ that any sufficiently smooth very weak solution (in terms of  \cref{defvws}) is also a classical solution to \cref{model}. 
\begin{Theorem}
In addition to \cref{assump}, let 
\begin{align}
	&\D\in C^{2}(\oa;\R^{d\times d}),\nonumber\\
	&c\in C^{2,1}(\oa \times (0,\infty))\cap C(\oa \times [0,\infty)).
\end{align}
 Then, if $c$ is a solution to \cref{model} in the sense of \cref{defvws},
then it solves this IBVP in the classical sense.
\end{Theorem}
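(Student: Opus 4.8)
The plan is to reverse, one at a time, the integrations by parts that produced the very weak formulation \cref{WeakF}, thereby recovering the equation \cref{IPDE} on $\oa\times(0,\infty)$, the no-flux condition \cref{bc} on $\partial\Omega\times(0,\infty)$, and the initial condition $c(\cdot,0)=c_0$. First I would record that under the strengthened hypotheses every term of the strong problem is classical: $\D\in C^2(\oa;\R^{d\times d})$ together with $c\in C^{2,1}(\oa\times(0,\infty))$ makes $\nabla\nabla:(\D c)$ continuous on $\oa\times(0,\infty)$, and \cref{lemconvstetig} applied to $c(\cdot,t)\in C^2(\oa)\subset C^{1,1}(\oa)$ gives $\As c(\cdot,t)\in C^1(\oa)$, so that (with joint continuity in $(x,t)$ inherited from $c$) $\nabla\cdot(c\As c)=\nabla c\cdot\As c+c\,\nabla\cdot(\As c)$ is continuous on $\oa\times(0,\infty)$ as well. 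Consequently all the integrations by parts below are justified.

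\emph{The equation in the interior.} Testing \cref{WeakF} with $\eta\in C_c^\infty(\Omega\times(0,\infty))$ --- admissible, since such $\eta$ vanishes near $\partial\Omega$ --- and moving every derivative back onto $c$ by integrating by parts twice in space and once in time, no boundary or initial contributions appear and one is left with
\begin{align*}
\int_0^\infty\io\big(\partial_t c-\nabla\nabla:(\D c)+\nabla\cdot(c\As c)-\mu c(1-c^{r-1})\big)\,\eta\,\td{x}\,\td{t}=0
\end{align*}
for all such $\eta$. By the fundamental lemma of the calculus of variations the integrand vanishes on $\Omega\times(0,\infty)$, and by the continuity noted above \cref{IPDE} in fact holds on all of $\oa\times(0,\infty)$.

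\emph{The boundary condition.} Next I would take an arbitrary admissible $\eta$ with $\supp\eta\subset\oa\times(0,\infty)$ and integrate the diffusion term of \cref{WeakF} by parts twice in space. The first-order boundary contribution $\int_{\partial\Omega}c\,(\nabla\eta\cdot\D\nu)\,\td{\sigma}$ vanishes by the constraint $\nabla\eta\cdot(\D\nu)\equiv0$, leaving $-\int_{\partial\Omega}\big((\nabla\cdot(\D c))\cdot\nu\big)\eta\,\td{\sigma}$, while the advection term contributes $\int_{\partial\Omega}(c\As c\cdot\nu)\eta\,\td{\sigma}$; inserting these into \cref{WeakF}, rewriting the time term as $\int_0^\infty\io\partial_t c\,\eta$, and cancelling the volume integrals by means of the previous step yields
\begin{align*}
\int_0^\infty\int_{\partial\Omega}\big((\nabla\cdot(\D c)-c\As c)\cdot\nu\big)\,\eta\,\td{\sigma}(x)\,\td{t}=0
\end{align*}
for all such $\eta$. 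The bracket is continuous on $\partial\Omega\times(0,\infty)$, so it suffices to realise every $g\in C_c^\infty(\partial\Omega\times(0,\infty))$ as the boundary trace of some admissible $\eta$. This is where the assumption $a=\dist(\partial\Omega,\{\D \ngtr 0\})>0$ enters decisively: $\D$ is positive definite in a neighbourhood of $\partial\Omega$, hence $\nu^T\D\nu>0$ on $\partial\Omega$, i.e. $\D\nu$ is nowhere tangent to $\partial\Omega$. Working in a tubular neighbourhood of $\partial\Omega$ parametrised by the signed distance (which is $C^3$ because $\partial\Omega$ is), one extends $g$ to an $\eta$ having boundary value $g$ and prescribed normal derivative $\nabla\eta\cdot\nu=-(\nu^T\D\nu)^{-1}\big(\D\nu-(\nu^T\D\nu)\nu\big)\cdot\nabla_{\mathrm{tan}}g$ on $\partial\Omega$, and multiplies by a cut-off that is flat and equal to $1$ at $\partial\Omega$; since the tangential gradient of $\eta$ along $\partial\Omega$ is determined by $g$, this $\eta$ satisfies $\nabla\eta\cdot(\D\nu)\equiv0$ on $\partial\Omega\times(0,\infty)$ by construction. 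Letting $g$ vary forces $(\nabla\cdot(\D c)-c\As c)\cdot\nu=0$ on $\partial\Omega\times(0,\infty)$, which is \cref{bc}.

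\emph{The initial condition and the main obstacle.} Finally I would test \cref{WeakF} with $\eta(x,t)=\chi(t)\zeta(x)$, $\zeta\in C_c^\infty(\Omega)$ and $\chi\in C_c^\infty([0,\infty))$ with $\chi(0)=1$. Integrating the right-hand side by parts in space and invoking the interior equation, the spatial integral equals $\io\partial_t c(\cdot,t)\zeta\,\td{x}=g'(t)$ for $t>0$, where $g(t):=\io c(\cdot,t)\zeta\,\td{x}$; since $g$ is continuous on $[0,\infty)$ and $g'$ extends continuously to $t=0$, the fundamental theorem of calculus gives $g\in C^1([0,\infty))$, and one integration by parts in $t$ in \cref{WeakF} then leaves $\io\big(c(\cdot,0)-c_0\big)\zeta\,\td{x}=0$; as $\zeta$ is arbitrary and $c(\cdot,0)\in C(\oa)$, this forces $c(\cdot,0)=c_0$. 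The one genuinely non-routine point is the surjectivity claim in the boundary step --- that admissible test functions, constrained by the oblique relation $\nabla\eta\cdot\D\nu=0$, still attain arbitrary boundary data. This is the parabolic analogue of the corresponding fact for the Neumann/oblique-derivative problem, and it is the only place where non-degeneracy of $\D$ near $\partial\Omega$ is used; without $a>0$ the field $\D\nu$ might be tangential and the construction would break down.
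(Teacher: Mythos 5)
Your proof is correct and follows the paper's overall outline (reverse the integrations by parts, apply the variational lemma in the interior, then attack the boundary term), but it replaces the paper's key density argument by an explicit construction. The paper proves \cref{lemdense}: the set of $\eta\in C^2(\oa)$ with $\nabla\eta\cdot(\D\nu)=0$ on $\partial\Omega$ is dense in $H^1(\Omega)$, established by introducing a globally nondegenerate modification $\tilde\D=\D+\mathbb{B}$, equipping $H^1(\Omega)$ with the inner product $\langle f,g\rangle=\io fg+\lambda\io(\nabla f)^T\tilde\D\nabla g$, and showing via $H^2$ solvability of the oblique-derivative elliptic problem (Grisvard) that the orthogonal complement of the admissible set is trivial; the boundary condition then follows through the trace operator. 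You instead construct, for each smooth boundary datum $g$, an admissible $\eta$ whose trace is exactly $g$ by prescribing the normal derivative $-(\nu^T\D\nu)^{-1}(\D\nu)\cdot\nabla_{\tan}g$ in a tubular neighbourhood. Both hinge on $a>0$ in the same place (positivity of $\nu^T\D\nu$ along $\partial\Omega$); your route is more elementary and proves the stronger statement that every smooth boundary datum is attained exactly, at the cost of needing the $C^3$ boundary to control regularity of the tubular-neighbourhood extension, whereas the paper's route is softer and outsources the work to elliptic regularity. Two small remarks. First, in the initial-condition step your claim that $g'$ extends continuously to $t=0$ should be justified not via $\partial_t c$ (which a priori is continuous only for $t>0$) but by writing $g'(t)=\io\big(c\D:D^2\zeta+c\As c\cdot\nabla\zeta+\mu c(1-c^{r-1})\zeta\big)\td x$, whose right-hand side is continuous up to $t=0$ because $c\in C(\oa\times[0,\infty))$. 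Second, you explicitly recover $c(\cdot,0)=c_0$, whereas the paper's written proof silently drops the term $\io\big(c(\cdot,0)-c_0\big)\eta(\cdot,0)\td x$ when passing to its identity and never returns to the initial condition; your treatment is the more complete one on that point.
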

\begin{proof}
Let 
\begin{align}
&\eta \in C_c^{2,1}(\oa \times [0,\infty)) \qquad\text{s.t. } \nabla\eta \cdot \left(\D \nu\right) = 0 \text{ on } \partial \Omega \times (0,\infty). \label{Asseta}                                                                                                                                           \end{align}
Then, there is $T\in (0,\infty)$ s.t. $\eta \equiv 0$ for $t \geq T$.
Integrating by parts on both sides of \cref{WeakF}, twice where necessary, using the information about  $c$ and $\eta$ on $\partial\Omega\times(0,T)$ that we have, yields
\begin{align}
 \int_0^{\infty} \io \partial_t c \eta \td{x} \td{t} 
	=&- \int_0^{\infty} \io  \nabla\cdot(c\D)\cdot \nabla\eta\td{x} \td{t} - \int_0^{\infty} \io \nabla\cdot(c(\As c))  \eta \td{x} \td{t}+\mu \int_0^{\infty} \io c (1-c^{r-1}) \eta \td{x} \td{t} \nonumber\\
	&+\int_0^{\infty}\int_{\partial\Omega}c(\As c)\cdot\nu\eta\td{\sigma}(x) \td{t} \nonumber\\
	=&\int_0^{\infty} \io  \left(\nabla\nabla:(c\D)-\nabla\cdot(c(\As c))+ \mu c (1-c^{r-1}) \right)\eta\td{x} \td{t}\nonumber\\
	&-\int_0^{\infty}\int_{\partial\Omega}\left(\nabla\cdot(\D c)-c(\As c)\right)\cdot\nu\eta\td{\sigma}(x) \td{t}.\label{eqc}
\end{align}
For the subset of $\eta \in C_c^{2,1}(\Omega\times (0,\infty))$ the boundary integral in  \cref{eqc} vanishes. Thus, the variational lemma applies and yields that $c$ satisfies 
\begin{align*}
	\partial_t c = \nabla \nabla : (\D(x)c) - \nabla \cdot (c \As c ) + \mu c (1-c^{r-1})
\end{align*}
pointwise in $\Omega\times (0,\infty)$. Now we can conclude from \cref{eqc} that for all $\eta$ satisfying \cref{Asseta} it holds that
\begin{align}\label{intrand}
	\int_0^{\infty}\int_{\partial\Omega}\left(\nabla\cdot(\D c)-c(\As c)\right)\cdot\nu\eta\td{\sigma}(x) \td{t}=0.
\end{align}
{From now on,} we consider $\eta$ of the form $\eta(x,t)=\eta_1(x)\eta_2(t)$, where $\eta_{{2}} \in C_c^{1}([0,\infty))$ and $\eta_{{1}} \in C^{2}(\oa)$ satisfies $\nabla\eta_{{1}} \cdot \left(\D \nu\right) = 0$ on $\partial \Omega$. Applying the variational lemma w.r.t. to time integration in  \cref{intrand}  yields
\begin{align}\label{intrandx}
	\int_{\partial\Omega}\left(\nabla\cdot(\D c)-c(\As c)\right)\cdot\nu\eta_{{1}}\td{\sigma}(x) =0\qquad\text{ for all }t\in(0,T).
\end{align}
The boundary condition \cref{bc} now follows with \cref{intrandx} and \cref{lemdense} below.
\end{proof}
\begin{Lemma} \label{lemdense}
	Let $\D\in C^{2}(\oa;\R^{d\times d})$ {symmetric}, $\D\geq 0$ with $\{\D\ngtr 0\}\cap \partial\Omega=\emptyset$. Then, the set
	\begin{align*}
		\{\eta \in {C^2(\oa)}: \nabla \eta \cdot (\D\nu)= 0 \text{ on }\partial\Omega\}
	\end{align*}
	is dense in $H^1(\Omega)$.
\end{Lemma}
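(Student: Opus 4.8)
The plan is to prove density by showing that the orthogonal complement (in $H^1(\Omega)$) of the given set of test functions is trivial. Since $\D\geq 0$ and $\{\D\ngtr 0\}\cap\partial\Omega=\emptyset$, the matrix $\D$ is uniformly positive definite in a neighbourhood $U$ of $\partial\Omega$, so the conormal derivative $\nabla\eta\cdot(\D\nu)$ is a genuine (oblique, non-degenerate) boundary operator near $\partial\Omega$. First I would observe that the set in question contains $C_c^2(\Omega)$ (for which the boundary condition is vacuous), so any $v\in H^1(\Omega)$ orthogonal to it must in particular be $H^1$-orthogonal to $C_c^2(\Omega)\supset C_c^\infty(\Omega)$; writing out $\langle v,\varphi\rangle_{H^1}=\int_\Omega (v\varphi+\nabla v\cdot\nabla\varphi)=0$ for all $\varphi\in C_c^\infty(\Omega)$ shows that $v$ is a weak (distributional) solution of $-\Delta v+v=0$ in $\Omega$, hence by elliptic regularity $v\in C^\infty(\Omega)$ and harmonic-type interior smoothness holds; moreover $v\in H^1(\Omega)$ with $\Delta v=v\in L^2(\Omega)$, so $v$ has a well-defined normal trace $\partial_\nu v\in H^{-1/2}(\partial\Omega)$.

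Next I would use the remaining test functions to extract a boundary condition on $v$. For $\eta\in C^2(\oa)$ with $\nabla\eta\cdot(\D\nu)=0$ on $\partial\Omega$, integration by parts gives
\begin{align*}
0=\langle v,\eta\rangle_{H^1}=\io (v\eta+\nabla v\cdot\nabla\eta)=\int_{\partial\Omega}\eta\,\partial_\nu v\,\td\sigma,
\end{align*}
using $-\Delta v+v=0$ in $\Omega$. So $v$ is a solution of the Neumann-type elliptic problem $-\Delta v+v=0$ in $\Omega$, and I must show the boundary integral constraint forces $\partial_\nu v=0$ on $\partial\Omega$ (whence $v=0$ by the standard energy identity $\io(|\nabla v|^2+v^2)=\int_{\partial\Omega}v\,\partial_\nu v=0$, using that $v$ itself is an admissible competitor once $\partial_\nu v=0$). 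The key point is therefore: the traces $\{\eta|_{\partial\Omega}:\eta\in C^2(\oa),\ \nabla\eta\cdot(\D\nu)=0\text{ on }\partial\Omega\}$ are dense in $L^2(\partial\Omega)$ (indeed all of $C^2(\partial\Omega)$ or more), so the vanishing of $\int_{\partial\Omega}\eta\,\partial_\nu v$ against all such $\eta$ yields $\partial_\nu v=0$.

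To justify that boundary-trace density I would argue locally: near any point of $\partial\Omega$, since $\D\nu$ is a $C^1$ vector field that is nowhere tangent to $\partial\Omega$ (because $\nu^T\D\nu>0$ there, as $\D$ is positive definite on $U\supset\partial\Omega$), for any prescribed boundary datum $g\in C^2(\partial\Omega)$ one can solve the oblique-derivative prescription by choosing $\eta$ that extends $g$ and simultaneously has prescribed conormal derivative $0$: concretely, take $\tilde g\in C^2(\oa)$ an arbitrary extension of $g$, then set $\eta=\tilde g-\chi\,\psi$ where $\psi\in C^2(\oa)$ vanishes on $\partial\Omega$ with $\partial_\nu\psi$ chosen to cancel $\nabla\tilde g\cdot(\D\nu)/(\nu^T\D\nu)$ on $\partial\Omega$ (possible since $\nu^T\D\nu$ is bounded below), and $\chi$ a cutoff supported near $\partial\Omega$. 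This produces $\eta\in C^2(\oa)$ with $\eta|_{\partial\Omega}=g$ and $\nabla\eta\cdot(\D\nu)=0$ on $\partial\Omega$; since $C^2(\partial\Omega)$ is dense in $L^2(\partial\Omega)$, the claim follows.

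The main obstacle is the last step — constructing, for each boundary datum, an admissible $\eta$ with the correct trace and vanishing conormal derivative — which hinges essentially on the non-tangency $\nu^T\D\nu>0$ on $\partial\Omega$ guaranteed by $\{\D\ngtr 0\}\cap\partial\Omega=\emptyset$ and the continuity (hence uniform positivity on the compact boundary) of $\D$; everything else is standard elliptic theory (interior regularity, existence of normal traces for $H^1$ functions with $L^2$ Laplacian via the Green formula, and the energy identity). One should also take mild care that $v\in H^1(\Omega)$ only, so the integration-by-parts formulas are interpreted via the $H^{-1/2}$–$H^{1/2}$ duality on $\partial\Omega$ rather than classically; this is routine given $\Delta v=v\in L^2(\Omega)$.
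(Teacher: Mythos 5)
Your proof is correct and takes a genuinely different route from the paper's. The paper adds a compactly supported positive-definite perturbation $\mathbb{B}$ to obtain a uniformly elliptic $\tilde\D = \D + \mathbb{B}$ agreeing with $\D$ near $\partial\Omega$, replaces the $H^1$ inner product by the equivalent $\tilde\D$-weighted form $\langle f,g\rangle = \io fg\, + \lambda\io (\nabla f)^T\tilde\D\nabla g$, and then, for $\xi$ in the orthogonal complement of $E$, solves a sequence of conormal elliptic BVPs $-\nabla\cdot(\tilde\D\nabla u_n)+\lambda u_n=\xi_n$ with smooth $\xi_n\to\xi$, invoking Grisvard's existence and regularity theory to obtain $u_n\in C^2(\oa)\cap E$ and conclude $\io\xi^2=\langle u,\xi\rangle=0$ in the limit. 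You keep the standard $H^1$ inner product, so the interior equation satisfied by an orthogonal-complement element is simply $-\Delta v+v=0$, independent of $\D$, and the whole problem reduces to showing $\partial_\nu v=0$ by directly constructing admissible $\eta\in E$ with arbitrary prescribed boundary trace — using only the non-tangency $\nu^T\D\nu>0$ on $\partial\Omega$. This is more elementary: it avoids all solvability and regularity theory for variable-coefficient boundary value problems and replaces it with a one-line corrector construction. A few small points to tidy. First, $\partial_\nu v$ obtained via the Green formula for $H^1$ functions with $L^2$ Laplacian lies in $H^{-1/2}(\partial\Omega)$, so the relevant density is in $H^{1/2}(\partial\Omega)$ rather than $L^2(\partial\Omega)$; since your constructed traces contain, say, $C^\infty(\partial\Omega)$, which is dense in $H^{1/2}(\partial\Omega)$, the conclusion is unaffected. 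Second, with $\tilde g\in C^2$ and $\D\in C^2$ the prescribed normal derivative $h=\nabla\tilde g\cdot(\D\nu)/(\nu^T\D\nu)$ is only $C^1(\partial\Omega)$, which is just short of what the standard extension $\psi=(h\circ\pi)\,d$ needs to produce a $C^2$ corrector; take $g$ and its extension $\tilde g$ in $C^3$ (or $C^\infty$), so that $h\in C^2(\partial\Omega)$ and $\psi\in C^2(\oa)$. Third, the final identity $\io(|\nabla v|^2+v^2)\,\text{d}x=0$ is just Green's formula applied to $v$ against itself (legitimate since $\Delta v\in L^2$), so there is no need to regard $v$ as an admissible competitor in $E$; that phrasing can be dropped.
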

\begin{proof}
	Since $\D$ is continuous and positive definite in $\partial\Omega$, it is positive definite in some open neighbourhood of $\partial\Omega$. Choose some { symmetric} $\mathbb{B} \in C_c^{\infty}(\Omega;\R^{d\times d})$, $\mathbb{B}\geq 0$, and positive definite in an open neighbourhood of $\{\D\ngtr 0\}$. Then, $$\tilde{\D} := \D + \mathbb{B}$$ satisfies
	$$\D = \tilde{\D}$$ in some open neighbourhood of $\partial \Omega$, and there is some $\delta>0$ s.t. $$y^T\tilde{\D}(x)y \geq \delta |y|^2\qquad\text{ for all }x\in \oa,\ y \in \R^d.$$ 
	In $H^1(\Omega)$, consider the scalar product
	\begin{align}
		\langle f,g \rangle := { \lambda} \io fg \td x + \io(\nabla f)^T \tilde{\D} \nabla g \td x\label{scpr}
	\end{align} 
	for some $\lambda>0$. Since $\tilde D$ is smooth and positive definite in $\overline{\Omega}$, the norm induced by \cref{scpr} is equivalent to the standard norm on $H^1(\Omega)$. 
	Set
	\begin{align*}
		E :=& \{\eta \in C^2(\oa):\quad \nabla \eta \cdot (\D\nu)= 0 \text{ on }\partial\Omega\}\\
		=&\{\eta \in C^2(\oa):\quad\nabla \eta \cdot (\tilde\D\nu)= 0 \text{ on }\partial\Omega\}.
	\end{align*}
	We thus need to verify that the orthogonal compliment of $E$ w.r.t. to the above scalar product in $H^1(\Omega)$ only contains the zero vector.
	Assume the contrary, i.e. that
	\begin{align*}
		\overline{E}^{\perp} = \{\xi \in H^1(\Omega):\quad \langle \xi, \eta \rangle = 0 \text{ for all } \eta \in \overline{E}\}\neq \{0\} .
	\end{align*}
	Let $\xi\in \overline{E}^{\perp}$.  Then, as $\partial \Omega$ is sufficiently smooth, there is a sequence $(\xi_n)_{n\in\N} \subset C^{\infty}(\oa)$ s.t. $\xi_n \underset{n\to\infty}{\rightarrow}\xi$ in $H^1(\Omega)$. Consider the sequence of elliptic BVPs
	\begin{subequations}\label{elBVP}
	\begin{alignat}{3} 
    -\nabla\cdot (\tilde{\D}\nabla u_{ n}) + \lambda u_{ n} =& \xi_{ n} &&\qquad\text{in } \Omega,\\
			\nabla u_{ n} \cdot (\tilde\D\nu) =& 0 &&\qquad\text{on }\partial \Omega.
	\end{alignat}
	\end{subequations}
	Theorems 2.3.3.6, 2.4.2.7 and 2.5.1.1 in \cite{Grisvard} imply that for sufficiently large $\lambda>0$ there exists a solution $u_n \in C^2(\oa)$ to \cref{elBVP} for all $n\in\N$, and $(u_n)_{n\in\N}$ is uniformly bounded in $H^2(\Omega)$. Consequently, due to the continuity of the trace operator and the compact embedding of $H^2$ in $H^1$ there exist a sequence $(n_l)$ and some $u \in H^2(\Omega)$ s.t.
	\begin{align*}
		u_{n_l} \underset{l\to \infty}{\rightharpoonup} u &\qquad\text{in } H^2(\Omega),\\
		u_{n_l} \underset{l\to \infty}{\rightarrow} u &\qquad\text{in } H^1(\Omega),\\
		u_{n_l} \underset{l\to \infty}{\rightharpoonup} u &\qquad\text{in } H^1(\partial\Omega).
	\end{align*}	
	It follows that  $u\in  \overline{E}$ and is a strong $L^2$ solution to the IBVP
	\begin{subequations} \label{elIBVPGW}
	\begin{alignat}{3} 
    -\nabla\cdot (\tilde{\D}\nabla u) + \lambda u =& \xi &&\qquad\text{in } \Omega,\label{BP}\\
			\nabla u \cdot (\tilde\D\nu) =& 0 &&\qquad\text{on }\partial \Omega.\label{BPbc}
	\end{alignat}
	\end{subequations}
 	Multiplying  \cref{BP} by $\xi$ and integrating by parts using \cref{BPbc} then yields
	\begin{align*}
		0 = \langle u, \xi \rangle = { \lambda} \io u\xi \td x + \io(\nabla u)^T \tilde{\D} \nabla \xi \td x = \io \xi^2 \td x. 
	\end{align*}
	This shows that $\xi \equiv 0$, contradicting the above assumption. Therefore, $\overline{E}=H^1(\Omega)$, as required.
\end{proof}
\section*{Acknowledgement}
\addcontentsline{toc}{section}{Acknowledgement}
\begin{itemize}
\item 
AZ was supported by the Engineering and Physical Sciences Research Council [grant number
EP/T03131X/1].
\item For the purpose of open access, the authors have applied a Creative Commons Attribution (CC BY) licence to any Author Accepted Manuscript version arising.
\item No new data were generated or analysed during this study.                                                       \end{itemize}
\phantomsection
\printbibliography

\crefalias{section}{appendix}\begin{appendices}
\section{A lemma about sets of ``sufficiently small'' dimension}\label{appB}
We recall one of the (alternative) ways of defining fractional  dimension, see (3.5)  and the subsequent discussion on p. 42 in  \cite{Falconer}.
\begin{Definition}[Upper box  dimension]\label{DefUB}
 Let $K\subset\R^d$ be compact. For every $\delta>0$ we denote 
 \begin{align}
  Z_{\delta}(K):=\left\{ b\in\delta\Z^d:\  |x-b|_{\infty}\leq\frac{\delta}{2}\text{ for some }x\in K\right\}.\nonumber
 \end{align}
 The upper box dimension is the non-negative number 
 \begin{align*}
  \overline{\dim}_{{\cal F}}(K):=\underset{\delta\to0}{\lim\sup}\frac{\log_2|Z_{\delta}(K)|}{\log_2\delta^{-1}}.
 \end{align*}

\end{Definition}

\begin{Lemma}\label{LemFD}
Let 
\begin{align}
 &3\leq d\in\N,\\
 &r>\frac{d}{d-2},
\end{align}
and $K\subset\R^d$ be a compact set such that 
\begin{align}
  \overline{\dim}_{{\cal F}}(K)<d-\frac{2r}{r-1}.\label{Assdim}
\end{align}
Then, there exists a family $(\vd)_{\delta\in(0,1)}$ of functions such that for all $\delta>0$
\begin{subequations}\label{AssPhi}
\begin{align}
&\vd \in C_{c}^{\infty}(\R^d;[0,1]), \label{vd1}\\
 &\vd=1\qquad\text{in }\overline{O_{{\delta\sqrt{d}}}(K)},\label{vd2}\\
 &\supp (\vd)\subset \overline{O_{5\delta\sqrt{d}}(K)},\label{vd3}\\
 &\|\nabla\vd\|_{(L^{\infty}(\R^d))^d}\leq \delta^{-1}\Cl{D_1},\label{der1}\\
 &\|D^2\vd\|_{(L^{\infty}(\R^d))^{d\times d}}\leq \delta^{-2}\Cr{D_1},\label{der2}\\
 &\vd\underset{\delta\to 0}{\to}0\qquad\text{a.e. in }\R^d,\label{ApconvPhi}\\
 &\underset{\delta\to0}{\lim}\,\delta^{-\frac{2r}{r-1}}|\supp(\varphi_{ \delta})|=0.\label{keylim}
\end{align}
\end{subequations}
\end{Lemma}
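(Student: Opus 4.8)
The plan is to use the classical cutoff recipe adapted to the grid $\delta\Z^d$: record the cubes meeting $K$ via $Z_\delta(K)$, surround their centres by slightly oversized Euclidean balls, and mollify. Concretely, fix a standard mollifier $\eta_1\in C_c^\infty(B_1;[0,\infty))$ with $\int\eta_1=1$ and a small dimensional constant $\kappa=\kappa(d)>0$; set $\eta_{\kappa\delta}(y):=(\kappa\delta)^{-d}\eta_1(y/(\kappa\delta))$, let $W_\delta:=\bigcup_{b\in Z_\delta(K)}\overline{B_{\rho\delta}(b)}$ with $\rho=\rho(d)$ to be chosen, and put $\vd:=\one_{W_\delta}\star\eta_{\kappa\delta}$. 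Since $K$ is compact, $Z_\delta(K)$ is finite, so $W_\delta$ is bounded and $\vd\in C_c^\infty(\R^d;[0,1])$, giving \cref{vd1}. The derivative bounds \cref{der1,der2} follow at once from Young's inequality, $\|\nabla\vd\|_{(L^\infty)^d}\le\|\nabla\eta_{\kappa\delta}\|_{L^1}=(\kappa\delta)^{-1}\|\nabla\eta_1\|_{L^1}$ and likewise $\|D^2\vd\|_{(L^\infty)^{d\times d}}\le(\kappa\delta)^{-2}\|D^2\eta_1\|_{L^1}$; enlarging the constant, the same $\Cr{D_1}$ serves both.

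Next I would record two elementary inclusions relating the grid to the Euclidean neighbourhoods $\overline{O_{t\delta}(K)}$. Because every point of $K$ lies within $\ell^\infty$-distance $\delta/2$, hence Euclidean distance $\tfrac{\sqrt d}{2}\delta$, of some $b\in Z_\delta(K)$, one gets $\overline{O_{t\delta}(K)}\subset\bigcup_{b\in Z_\delta(K)}\overline{B_{(t+\sqrt d/2)\delta}(b)}$ for every $t>0$; conversely, since $\dist(b,K)\le\tfrac{\sqrt d}{2}\delta$ for $b\in Z_\delta(K)$, each ball $\overline{B_{s\delta}(b)}$ lies in $\overline{O_{(s+\sqrt d/2)\delta}(K)}$. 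As $\vd(x)=1$ whenever $\overline{B_{\kappa\delta}(x)}\subset W_\delta$ and $\supp\vd\subset W_\delta+\overline{B_{\kappa\delta}(0)}$, combining these inclusions shows $\vd\equiv1$ on $\overline{O_{3\delta\sqrt d}(K)}$ and $\supp\vd\subset\overline{O_{5\delta\sqrt d}(K)}$ as soon as $\tfrac72\sqrt d+\kappa\le\rho\le\tfrac92\sqrt d-\kappa$, which holds e.g. for $\rho=4\sqrt d$, $\kappa=\tfrac14\sqrt d$; this yields \cref{vd2,vd3}. For \cref{ApconvPhi}, note that $\bigcap_{\delta>0}\overline{O_{5\delta\sqrt d}(K)}=K$ (as $K$ is closed) and $|K|=0$ (from $\overline{\dim}_{\cal F}(K)<d$ together with the volume bound of the next paragraph), so every $x\notin K$ satisfies $\vd(x)=0$ for all small $\delta$.

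The only substantive point is \cref{keylim}. From $\supp\vd\subset\bigcup_{b\in Z_\delta(K)}\overline{B_{(\rho+\kappa)\delta}(b)}$ one has $|\supp\vd|\le \Cl{volbd}(d)\,|Z_\delta(K)|\,\delta^d$. By \cref{Assdr}, multiplying out $r>\tfrac{d}{d-2}$ gives $d>\tfrac{2r}{r-1}$, so by hypothesis \cref{Assdim} there is an exponent $\beta$ with $\overline{\dim}_{\cal F}(K)<\beta<d-\tfrac{2r}{r-1}$. The definition of the upper box dimension (\cref{DefUB}) then provides $\delta_0\in(0,1)$ such that $|Z_\delta(K)|\le\delta^{-\beta}$ for all $\delta\in(0,\delta_0)$, whence $\delta^{-\frac{2r}{r-1}}|\supp\vd|\le\Cr{volbd}(d)\,\delta^{\,d-\beta-\frac{2r}{r-1}}\to0$ as $\delta\to0$, the exponent being strictly positive. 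The same estimate with any $\beta<d$ gives $|K|\le\liminf_\delta\Cr{volbd}(d)\delta^{d-\beta}=0$, closing the gap left above.

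I do not expect a genuine obstacle here: this is a routine cutoff construction, and its whole content is that the box-dimension hypothesis is precisely what forces the $O(\delta)$-fattening of $K$ to have volume decaying faster than $\delta^{2r/(r-1)}$ — this is the only place the quantity $\tfrac{2r}{r-1}$ enters and is the reason \cref{bedd} is phrased via the upper box dimension. The two points that require a little care are the bookkeeping of the dimensional constants in the $\ell^\infty$-to-Euclidean conversions, so that the radii slightly above $3\sqrt d$ and slightly below $5\sqrt d$ still leave room for a mollification at scale $\kappa\delta$, and the selection of the intermediate exponent $\beta$ strictly between $\overline{\dim}_{\cal F}(K)$ and $d-\tfrac{2r}{r-1}$, which is exactly where the assumptions $d\ge3$ and $r>\tfrac{d}{d-2}$ are used.
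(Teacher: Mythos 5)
Your proof is correct, and the high-level strategy coincides with the paper's: cover $K$ by the lattice cells recorded in $Z_\delta(K)$, build a smooth cutoff $\vd$ that equals $1$ on $\overline{O_{3\delta\sqrt d}(K)}$, is supported in $\overline{O_{5\delta\sqrt d}(K)}$, and has derivatives of order $\delta^{-1}$, $\delta^{-2}$, and then deduce \cref{keylim} from the bound $|\supp\vd|\leq C(d)\,|Z_\delta(K)|\,\delta^d$ together with assumption \cref{Assdim}. The one genuine difference is how $\vd$ is built. The paper uses a Shepard-type quotient of lattice bump sums: $\vd(x)$ is the sum of $\eta(|x-z|/(\delta\sqrt d))$ over $z\in\delta\Z^d\cap O_{3\delta\sqrt d}(K)$ divided by the same sum over all $z\in\delta\Z^d$, and the derivative bounds \cref{der1,der2} then come from the quotient rule, the lower bound $\geq1$ on the denominator, and a uniform bound on the number of nonvanishing summands near any point. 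You instead mollify the indicator of the union of balls $W_\delta=\bigcup_{b\in Z_\delta(K)}\overline{B_{4\sqrt d\,\delta}(b)}$ at scale $\tfrac14\sqrt d\,\delta$, so \cref{der1,der2} drop out directly from Young's inequality; the cost is the small bookkeeping to fit the ball radius and mollification scale between $3\sqrt d$ and $5\sqrt d$, which you handled correctly. Your final step with the intermediate exponent $\beta\in\bigl(\overline{\dim}_{\cal F}(K),\,d-\tfrac{2r}{r-1}\bigr)$ is the same computation as the paper's, which works directly with the $\limsup$ instead of naming $\beta$. Both routes are routine and rigorous; yours is perhaps a touch cleaner on the derivative estimates.
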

\begin{proof}
 Let $\eta\in C_0^{\infty}([0,\infty);[0,1])$ be such that 
 \begin{align*}
  \eta=\begin{cases}
        1&\text{ in }[0,1],\\
        0&\text{ in }[2,\infty).
       \end{cases}
 \end{align*}
Set
\begin{align}
 \vd(x):=\frac{\sum\left\{\eta\left(\frac{|x-z|}{\delta\sqrt{d}}\right):\ z\in \delta\Z^d\cap O_{3\delta\sqrt{d}}(K)\right\}}{\sum\left\{\eta\left(\frac{|x-z|}{\delta\sqrt{d}}\right):\ z\in \delta\Z^d\right\}}\qquad \text{for }x\in\R^d,\ \delta\in(0,1).\label{defvar}
\end{align}
We need to check that $\vd$ satisfies the required properties.
\begin{enumerate}
\item\label{item1}  Since 
\begin{align}
 \max_{x\in\R^d}\left|B(x;3\delta\sqrt{d})\cap \delta\Z^d\right|=&\max_{y\in\R^d}\left|B(y;3\sqrt{d})\cap \Z^d\right|\nonumber\\
 =: &\Cl{D2}<\infty\nonumber
\end{align}
for some $\Cr{D2}>0$, 
and $\eta= 0$ in $[2,\infty)$, the sums in \cref{defvar} contain at most $\Cr{D2}$ non-zero summands.
 \item\label{item2} Since $\R^d= O_{\delta\sqrt{d}}(\delta\Z^d)$ and $\eta= 1$ in $[0,1]$, the denominator in \cref{defvar} is never zero, and
 \begin{align}
  \sum\left\{\eta\left(\frac{|x-z|}{\delta\sqrt{d}}\right):\ z\in \delta\Z^d\right\}\geq1.\label{denom}
 \end{align}

 \item By \cref{item1,item2}  and the assumptions on $\eta$, function $\vd$ is well-defined and belongs to $C_0^{\infty}(\R^d;[0,1])$. 
 \item Since $\eta=0$ in $[2,\infty)$, the numerator and the denominator coincide in $O_{\delta\sqrt{d}}(K)$, hence $\vd=1$ there.
 \item Since $\eta=0$ in $[2,\infty)$,
 \begin{align}
\supp(\vd)\subset  &\overline{O_{2\delta\sqrt{d}}\left(\delta\Z^d\cap O_{3\delta\sqrt{d}}(K)\right)}\nonumber\\
\subset& \overline{O_{5\delta\sqrt{d}}(K)}\nonumber \\
\subset& \overline{O_{6\delta\sqrt{d}}(Z_{\delta}(K))}.  \label{suppvd}                                  \end{align}
Combining \cref{suppvd,Assdim}, we obtain 
\begin{align}
 \log_2\left(\delta^{-\frac{2r}{r-1}}|\supp(\vd)|\right)\leq &\log_2\left(\delta^{-\frac{2r}{r-1}}|Z_{\delta}(K)|(6\delta\sqrt{d})^d|B_1|\right)\nonumber\\
 \leq&\log_2\left(\Cl{D1} \delta^{d-\frac{2r}{r-1}}|Z_{\delta}(K)|\right)\nonumber\\
 =&\log_2\Cr{D1}+\log_2\delta^{-1}\left(\frac{\log_2|Z_{\delta}(K)|}{\log_2\delta^{-1}}-\left(d-\frac{2r}{r-1}\right)\right)\nonumber\\
 \underset{\delta\to0}{\to}&-\infty,
\end{align}
so \cref{keylim} holds.
\item We compute
\begin{align}
 \nabla\vd(x)=&(\delta\sqrt{d})^{-1}\sign(x-z)\left(\frac{\sum\left\{\eta'\left(\frac{|x-z|}{\delta\sqrt{d}}\right):\ z\in \delta\Z^d\cap O_{3\delta\sqrt{d}}(K)\right\}}{\sum\left\{\eta\left(\frac{|x-z|}{\delta\sqrt{d}}\right):\ z\in \delta\Z^d\right\}}\right.\nonumber\\
 &-\left.\frac{\sum\left\{\eta\left(\frac{|x-z|}{\delta\sqrt{d}}\right):\ z\in \delta\Z^d\cap O_{3\delta\sqrt{d}}(K)\right\}\left(\sum\left\{\eta'\left(\frac{|x-z|}{\delta\sqrt{d}}\right):\ z\in \delta\Z^d\right\}\right)}{\left(\sum\left\{\eta\left(\frac{|x-z|}{\delta\sqrt{d}}\right):\ z\in \delta\Z^d\right\}\right)^2}\right).\label{der_1}
\end{align}
Combining \cref{item1,item2,der_1} and the assumptions on $\eta$, we obtain \cref{der1}. Differentiating again and using the same argument yields \cref{der2}.
\item The convergence \cref{ApconvPhi} is a direct consequence of \cref{vd2,vd3} and $\overline{\dim}_{{\cal F}}(K)<d$. 
\end{enumerate}

\end{proof}

\end{appendices}

\end{document}